\documentclass[a4paper,10pt]{article}
\usepackage{hyperref}
\usepackage[T1]{fontenc}
\usepackage[utf8]{inputenc}

\usepackage{natbib}

\usepackage{todonotes}

\usepackage[english]{babel}
\usepackage{mathtools}
\usepackage{amsfonts}
\usepackage{color}
\usepackage{amsmath}
\usepackage{amsthm}
\usepackage{amssymb}
\usepackage{mathrsfs}
\usepackage{amstext}

\usepackage[normalem]{ulem}

\usepackage{graphicx}
\usepackage{tikz}
\usepackage{subfig}
\usepackage{pgfplots}
\usepgfplotslibrary{fillbetween}
\pgfplotsset{compat=1.18}
\usepackage{subcaption}

\usepackage{authblk}
\usepackage{stmaryrd}
\usepackage{makeidx}
\makeindex

\numberwithin{equation}{section}

\theoremstyle{definition}
\newtheorem{thm}{Theorem}[section]
\newtheorem{rem}[thm]{Remark}
\newtheorem{defi}[thm]{Definition}
\newtheorem{lem}[thm]{Lemma}
\newtheorem{cor}[thm]{Corollary}
\newtheorem{prop}[thm]{Proposition}


\newtheorem*{thm*}{Theorem}
\newtheorem*{rem*}{Remark}
\newtheorem*{folg*}{Folgerung}
\newtheorem*{examples*}{Beispiele}
\newtheorem*{ex*}{Beispiel}
\newtheorem*{lem*}{Lemma}
\newtheorem*{prop*}{Proposition}
\newtheorem*{defi*}{Definition}
\newtheorem*{exercise*}{Übung}
\newtheorem*{conj*}{Conjecture}
\newtheorem*{q*}{Question}

\usepackage{cleveref}


\newcommand{\bB}{\mathbf{B}}

\newcommand{\bE}{\mathbf{E}}

\newcommand{\bl}{\underline{\ell}}
\newcommand{\bbeta}{\underline{\beta}}
\newcommand{\bgamma}{\underline{\gamma}}

\newcommand{\bj}{\mathbf{j}}
\newcommand{\bk}{\mathbf{k}}
\newcommand{\bm}{\mathbf{m}}
\newcommand{\bn}{\mathbf{n}}
\newcommand{\bs}{\mathbf{s}}

\newcommand{\bv}{\mathbf{v}}

\newcommand{\bx}{\mathbf{x}}
\newcommand{\by}{\mathbf{y}}


\newcommand{\bbC}{\mathbb{C}}
\newcommand{\bbD}{\mathbb{D}}

\newcommand{\bbN}{\mathbb{N}}

\newcommand{\bbR}{\mathbb{R}}

\newcommand{\bbT}{\mathbb{T}}

\newcommand{\bbZ}{\mathbb{Z}}


\newcommand{\cF}{\mathcal{F}}

\newcommand{\cL}{\mathcal{L}}

\newcommand{\cR}{\mathcal{R}}



\newcommand{\diffd}{\mathrm{d}}

\newcommand{\es}{\emptyset}

\newcommand{\iu}{\mathrm{i}}

\newcommand{\sbse}{\subseteq}
\newcommand{\sbsn}{\subsetneq}


\newcommand{\QMC}{e^{\text{QMC}}}
\newcommand{\sinc}{\operatorname{sinc}}
\newcommand{\supp}{\operatorname{supp}}


\newcommand{\Hmix}{H_\text{mix}}

\newcommand{\seqsp}{H_{\text{mix}}^{2,\text{dyad}}}
\newcommand{\Fab}{\mathsf{Fab}}

\author{Bernd K\"a{\ss}emodel\thanks{bernd.kaessemodel@mathematik.tu-chemnitz.de}, Nicolas Nagel\thanks{nicolas.nagel@mathematik.tu-chemnitz.de}, Tino Ullrich\thanks{tino.ullrich@mathematik.tu-chemnitz.de}}
\affil{\small{Technische Universität Chemnitz\\
		Fakultät für Mathematik\\
		09107 Chemnitz }}

\title{Tent transformed order $2$ nets and quasi-Monte Carlo rules with quadratic error decay}
\date{}

\setlength{\parindent}{0pt}

\pgfplotsset{yticklabel style={text width=3em,align=right}}

\begin{document}
	
	\maketitle
	
	
	\begin{abstract}
		We investigate the use of order $2$ digital nets for quasi-Monte Carlo quadrature of nonperiodic functions with bounded mixed second derivative over the cube. By using the so-called tent transform and its mapping properties we inherit error bounds from the periodic setting. Our analysis is based on decay properties of the multivariate Faber-Schauder coefficients of functions with bounded mixed second weak derivatives. As already observed by Hinrichs, Markhasin, Oettershagen, T. Ullrich (Numerische Mathematik 2016),  order $2$ nets  work particularly well on tensorized (periodic) Faber splines. From this we obtain a quadratic decay rate for tent transformed order $2$ nets also in the nonperiodic setting. This improves the formerly best known bound for this class of point sets by a factor of $\log N$.
		
		We back up our findings with numerical experiments, even suggesting that the bounds for order $2$ nets can be improved even further. This particularly indicates that point sets of lower complexity (compared to previously considered constructions) may already give (near) optimal error decay rates for quadrature of functions with second order mixed smoothness. 
	\end{abstract}
	
	\section{Introduction}
	
	\subsection{The quasi-Monte Carlo rule}
	High-dimensional quadrature is an important research field in numerical analysis and approximation theory. There are various methods and algorithms for this task but the simplest one is arguably the \emph{quasi-Monte Carlo integration rule}. Given a domain $\Omega$ together with a normalized measure $\mu$ (that is $\mu(\Omega) = 1$) we aim to approximate
	$$
	\int_\Omega f(x) \, \diffd \mu(x) \approx \frac1N \sum_{x \in X} f(x)
	$$
	where $X \sbse \Omega, \# X = N$ is a finite set of evaluation nodes and $f$ is a real-valued function over $\Omega$, usually fulfilling some smoothness assumptions. The relative worst case error for this algorithm measures the quality of the point set $X$ in $\Omega$.
	
	In this paper we focus on the case where $\Omega = [0, 1]^d$ is the $d$-dimensional unit cube and $f$ is a (not necessarily periodic) function with dominating mixed second order smoothness. Let $\Hmix^2([0, 1]^d)$ be the Sobolev space of bounded mixed smoothness $2$ as will be defined in Section \ref{subsec:Sobolev}. The \emph{worst case error} of the quasi-Monte Carlo quadrature rule of a finite set $X \sbse [0, 1]^d, \#X = N$ over this space is given by
	$$
	\QMC(X, \Hmix^2([0, 1]^d)) \coloneqq \sup\limits_{\|f\|_{\Hmix^2([0, 1]^d)} \leq 1}\left|\frac1N \sum_{\bx \in X} f(\bx) - \int_{[0, 1]^d} f(\bx) \, \diffd\bx\right|
	$$
	and the \emph{optimal worst case error} for a given number of points $N$ is defined by
	$$
	\QMC(N, \Hmix^2([0, 1]^d)) \coloneqq \inf\limits_{\substack{X \sbse [0, 1]^d \\ \# X = N}} \QMC(X, \Hmix^2([0, 1]^d)).
	$$
	The asymptotic behavior of this quantity over certain function spaces as $N$ goes to infinity is a popular and well-studied problem. The classical result in this direction is the Koksma-Hlawka inequality \cite{Hic2014, Hla1984, KN1974} where the norm is given by the Hardy-Krause variation. In this case the worst case error turns out to be the star discrepancy of the point set $X$. This is a general theme and in many cases the worst case error possesses such a geometric interpretation via discrepancy. For more details and asymptotic results on discrepancy see \cite{BLV2008, DP2010, Nie1992}.
	
	\subsection{Quasi-Monte Carlo via order $2$ nets}
	
	Our setting is strongly inspired by \cite{Dic2007, HMOU2016} where the analogous setting for periodic functions (that is where the domain is the torus $\bbT^d$) was investigated. Upper bounds via constructive point sets were considered in \cite{Dic2008} (building on \cite{Dic2007}), where the bound
	\begin{align} \label{ineq:Dick}
		\QMC(X, \Hmix^s([0, 1]^d)) \lesssim \frac{(\log N)^{ds}}{N^s}
	\end{align}
	via order $s$ digital nets $X$ was determined. Digital nets use bit-manipulation techniques to generate point sets that distribute themselves well in the cube (in this paper we will only consider nets in base $b=2$). The order parameter indicates a further modification, where we compress high-dimensional nets via digital interlacing to construct point sets with a fine structure that adapts itself to higher smoothness parameters $s$. Details will be discussed in Section \ref{sec:nets}. Later on, it was shown in \cite{BD2009} that one can achieve
	\begin{align*} 
		\QMC(X, \Hmix^s([0, 1]^d)) \lesssim \frac{(\log N)^{ds/2}}{N^s}
	\end{align*}
	via randomly shifted order $s$ nets. However, this makes the point set probabilistic and nonconstructive.
	
	The optimal behavior of the worst case error was determined in \cite{GSY16, GSY2016, GSY2018}. Indeed, for integer smoothness $s \geq 2$ it was shown that
	\begin{align} \label{eq:GSY_optimal_order}
		\QMC(X, \Hmix^s([0, 1]^d)) \lesssim \frac{(\log N)^{(d-1)/2}}{N^s}
	\end{align}
	for order $2s+1$ nets in base $2$ (order $2s$ if one also considers sufficiently large bases $b > 2$), giving an explicit construction. For smoothness $s=2$ one thus needs order $5$ binary nets according to this method.
	
	For another, randomized setting see \cite{Dic2011} and for recent results concerning lattice constructions see \cite{GK2024}.
	
	In this paper we will consider a simple modification to order $2$ digital nets coming from a periodization procedure called the \emph{tent transform}. For these constructive point sets we will prove, for smoothness $s=2$, a rate of
	\begin{equation}\label{eq:bound}
		\QMC(X, \Hmix^2([0, 1]^d)) \lesssim \frac{(\log N)^{2d-1}}{N^2},
	\end{equation}
	see Theorem \ref{thm:upper_bound}. This gives an improvement over \eqref{ineq:Dick} for the same order parameter $2$ and shows that \eqref{ineq:Dick} is not best possible for constructions based on order $2$ nets. Numerical experiments suggest that this bound might even be further improved, see Section \ref{sec:numerics}. The utility of the tent transform for quasi-Monte Carlo integration with digital nets was already observed in \cite{CDLP07}, although the author there could not determine a logarithmic factor.
	
	Reducing the order of the constructed point sets is of interest for two reasons. First, from a practical point of view, it is easier to construct lower order digital nets than higher order ones. This is due to the construction described in Section \ref{sec:nets}, where one needs to start with an $\alpha d$-dimensional digital net to get a point set in $d$ dimensions. Since computer implementations of these point sets are highly nontrivial and are usually only available up to dimensions $33$ \cite{KN2016}, lowering the order parameter is highly required. For example, using this data base one can generate order $2$ nets up to $d=16$ dimensions, but order $5$ nets only up to $d=6$ dimensions.
	
	A second, more theoretical aspect is the question of how complex a point set must be to achieve (asymptotically) the optimal rate. This can be made precise as follows. The point sets considered in \cite{GSY2018} for smoothness $s=2$ consist of nodes whose coordinates use $5 \log_2 N$ bits in their binary representation. The tent transformed order $2$ nets only use $2 \log_2 N - 1$ bits in the representation of their components. It is thus natural to ask how many bits are necessary to obtain point sets of asymptotically optimal behavior. In general, order $\alpha$ digital nets are, in the sense as just described, of complexity $\alpha \log_2 N$. The question of the required complexity is thus related to the smallest order parameter necessary (depending on the smoothness parameter) for optimal bounds of the worst case error. We are not aware if this aspect has been studied in the existing literature, although it is similar to \cite{HO2016}, where point sets which can be well approximated by permutations have been determined to be globally optimal for the periodic case in dimension $d=2$ and smoothness $s=1$. This can of course be considered, also using other notions of complexity, for other problems in discrepancy and quasi-Monte Carlo theory.
	
	\subsection{Faber-Schauder coefficients and tent transform}
	
	For functions $f \in \Hmix^2(\bbR^d)$ possessing second order mixed weak derivatives bounded in $L_2(\bbR^d)$ we rigorously prove the following decay property of the Faber-Schauder coefficients $d_{\bj,\bk}(f)$, $\bj \in \bbN_{-1}^d, \bk \in \bbZ^d$, see Section \ref{sec:tfaber}. It holds
	\begin{equation}\label{eq:18}
		\sup\limits_{\bj \in \bbN_{-1}^d} 2^{ 3|\bj|_1/2} \left(\sum_{\bk \in \bbZ^d} |d_{\bj, \bk}(f)|^2\right)^{1/2} 
		\lesssim \|f\|_{\Hmix^2(\bbR^d)}.
	\end{equation}
	This is our main tool for proving the error bound \eqref{eq:bound}. The above relation directly implies the corresponding relation for spaces $\Hmix^2([0,1]^d)$. The left-hand side does not seem to fit properly to right-hand side if one has in mind the corresponding relation for $1/2<s<2$, see \cite{HMOU2016}. However, the results in \cite[Theorem 1.8]{GSU23} indicate that the left-hand side might even be an equivalent norm on the space $\Hmix^2$. We leave this as an open problem.  
	
	We further use the boundedness of the tent transform operator 
	$$
	\mathcal{R}:f(x_1, \dots, x_d) \mapsto f(|2x_1-1|,\dots,|2x_d-1|) \quad (\bx \in [0,1]^d)\,,
	$$
	in various situations, see Section \ref{sec:tent}. Namely, we have 
	\begin{equation}\label{eq:emb}
		\begin{split}
			\cR&: \Hmix^2([0, 1]^d) \rightarrow \seqsp(\bbT^d) \quad \mbox{(Theorem \ref{thm:FS_charact.} and Theorem \ref{thm:tent})}\\
			\cR&: \Hmix^2([0, 1]^d) \rightarrow \bE^2_d\quad \mbox{(Corollary \ref{cor:Korobov})} \\
			\cR&: \bB_{1,\infty}^{2,\text{dyad}}([0,1]^d) \to \bB_{1,\infty}^{2,\text{dyad}}(\bbT^d). 
		\end{split}
	\end{equation}
	
	The last two embeddings are relevant due to the fact that several applications, for example in mathematical finance, require the treatment and quadrature of sums of tensor products of piecewise linear functions. Clearly, these functions neither belong to $\Hmix^2$ nor to $\Hmix^{2,\text{dyad}}$. Numerical experiments show (see Figure \ref{fig:piecewise_linear} below) that the errors decay with main rate $N^{-2}$ or even better. There are several ways to explain this effect. One is that the tent transform maps such functions into the space $\mathbf{E}^2_d$, the classical Korobov space, see \cite[Section 3.3]{DTU18}, where one knows the corresponding decay rates \cite{Dic2007}. This direction will be elaborated on in Section \ref{sec:Korobov}. Here we want to point out a different reason, namely the third embedding in \eqref{eq:emb}, where for $1\leq p<\infty$
	$$
	\|f\|_{\bB_{p,\infty}^{2,\text{dyad}}}\coloneqq
	\sup\limits_{\bj \in \bbN_{-1}^d} 2^{ |\bj|_1(2-1/p)} \left(\sum_{\bk} |d_{\bj, \bk}(f)|^p\right)^{1/p} < \infty. 
	$$
	Note, that in this notation we have the identification $\bB_{2,\infty}^{2,\text{dyad}} = \Hmix^{2,\text{dyad}}$. The range of $\bk$ in the sum above depends on which domain ($[0,1]^d$ or $\bbT^d$) is used, see Definition \ref{def:univar_Faber}, its higher dimensional analogue and Definition \ref{def:seq_space} below. The mentioned test functions belong to such spaces, where we trade $L_2$-integrability against $L_1$-integrability and gain smoothness. Here we use the dyadic versions of corresponding Besov spaces with dominating mixed smoothness, similar as in \cite[Section 1.3]{GSU23}. As a direct consequence we establish an analogous bound to \eqref{eq:bound}, however, now for the significantly larger space $\bB_{1,\infty}^{2,\text{dyad}}([0,1]^d)$, see \eqref{eq:21}.
	
	\paragraph{Notation.}
	The $d$-dimensional torus will be denoted by $\bbT^d$ and its points will be identified with points in $[0, 1)^d$. Vectors will be denoted in bold type $\bk, \bx$ and so on (with components $k_i, x_i$ and so on), with the exceptions $\bl, \underline{\beta}, \underline{\gamma} \in \bbR^d$ (with components $\ell_i, \beta_i$ and so on). We will use $\bbN_0 = \{0, 1, 2, \dots, \}$ and $\bbN_{-1} = \{-1\} \cup \bbN_0$. For $\bj \in \bbN_{-1}^d$ we set $|\bj|_1 = |j_1|+\dots+|j_d|$. The positive part of a number $x \in \bbR$ will be denoted by $(x)_+ \coloneqq \max\{x, 0\}$. The indicator function of a set $B$ will be denoted by $\chi_B^{}$. We will denote (weak) partial derivatives by
	$$
	f^{(s_1, \dots, s_d)}(\bx) \coloneqq \left(\prod_{i = 1}^d \frac{\partial^{s_i}}{\partial x_i^{s_i}}\right) f(\bx).
	$$
	The \emph{Schwartz space} of rapidly decreasing functions will be denoted by $S(\bbR^d)$. For $f \in S(\bbR^d)$ the Fourier transform over $\bbR^d$ is defined as
	$$
	\cF_d [f](\bs) \coloneqq \int_{\bbR^d} f(\bx) \exp(-2\pi\iu \bs^\top \bx) \, \diffd \bx
	$$
	for $\bs \in \bbR^d$, with inverse given by $\cF_d^{-1} [f](\bs)\coloneqq\cF_d[f](-\bs)$. The class of bounded, continuous functions over a domain $\Omega$ will be denoted by $C_b(\Omega)$ and equipped with the $\sup$-norm $\|\cdot\|_\infty$. For functions $f$ and $g$ we write $f \lesssim g$ if there is a constant $C > 0$ such that $f \leq C g$ for all inputs. We will use $f \asymp g$ as an abbreviation for $f \lesssim g$ and $g \lesssim f$.
	
	\section{Sobolev spaces and their tensor products} \label{subsec:Sobolev}
	
	The univariate Sobolev space $H^2([0, 1])$ of smoothness $2$ can be defined by
	$$
	H^2([0, 1]) = \{f \in L_2([0, 1]): f'' \in L_2([0, 1])\},
	$$
	where $f''=f^{(2)}$ denotes the second weak derivative. We will only consider real valued functions $f$. There are multiple, equivalent norms we can define on this space as to turn it into a Hilbert space. It is common to use
	$$
	\|f\|_{H^2([0, 1])}^2 \coloneqq \|f\|_{L_2([0, 1])}^2 + \|f''\|_{L_2([0, 1])}^2, 
	$$
	also giving the space an obvious inner product turning it into a Hilbert space. From \cite[Examples 21-23]{BTA2003} we will also consider
	\begin{align*}
		\|f\|_1^2 & \coloneqq \left(\int_0^1 f(x) \, \diffd x\right)^2 + \left(\int_0^1 f'(x) \, \diffd x\right)^2 + \int_0^1 f''(x)^2 \, \diffd x, \\
		\|f\|_2^2 & \coloneqq f(0)^2 + f(1)^2 + \int_0^1 f''(x)^2 \, \diffd x, \\
		\|f\|_3^2 & \coloneqq f(0)^2 + f'(0)^2 + \int_0^1 f''(x)^2 \, \diffd x.
	\end{align*}
	These norms even turn out to give $H^2([0, 1])$ the structure of a \emph{reproducing kernel Hilbert space} (RKHS), see \cite{BTA2003} for details. For $\|\cdot\|_j, j=1, 2, 3$, the kernels can be given in closed form by
	\begin{align*}
		K^1_1(x, y) & = 1 + B_1(x) B_1(y) + \frac14 B_2(x) B_2(y) - \frac1{24} B_4(|x-y|), \\
		K_2^1(x, y) & = 1 - x - y + 2xy + \frac16 (x-y)_+^3 - \frac16 x(1-y)\left(x^2-2y+y^2\right), \\
		K_3^1(x, y) & = 1 + xy + \frac13 \min\{x, y\}^3 - \frac12 (x+y) \min\{x, y\}^2 + xy \min\{x, y\},
	\end{align*}
	respectively, where
	$$
	B_1(x) = x-\frac12, \quad B_2(x) = x^2-x+\frac16, \quad B_4(x) = x^4-2x^3+x^2-\frac1{30}
	$$
	are Bernoulli polynomials. The $d$-variate Sobolev space with bounded mixed second derivatives $\Hmix^2([0, 1]^d)$ is given by the $d$-fold tensor product of $H^2([0, 1])$ with itself. We again obtain RKHSs with kernels given by
	$$
	K_j^d(\bx, \by) = \prod_{i=1}^d K_j^1(x_i, y_i)
	$$
	for $j=1, 2, 3$ respectively. The theory of numerical integration over RKHSs is especially nice since we can express the worst case error explicitly by \cite{NW2010}
	\begin{align*}
		& \sup\limits_{\|f\|_j \leq 1} \left|\int_{[0, 1]^d} f(\bx) \, \diffd\bx - \frac1N \sum_{\bx \in X} f(\bx) \right|^2 \\
		= & \iint_{[0, 1]^d} K_j^d(\bx, \by) \, \diffd (\bx, \by) - \frac2N \sum_{\bx \in X} \int_{[0, 1]^d} K_j^d(\bx, \by) \, \diffd \bx + \frac1{N^2} \sum_{\bx, \by \in X} K_j^d(\bx, \by),
	\end{align*}
	which can be given for $j = 1$ by
	\begin{align*}
		-1 + \frac1{N^2} \sum_{\bx, \by \in X} K_1^d(\bx, \by),
	\end{align*}
	for $j=2$ by
	\begin{align*}
		\left(\frac{61}{120}\right)^d - \frac2N \sum_{\bx \in X} \prod_{i=1}^d \left(\frac12 + \frac{x_i}{24} - \frac{x_i^3}{12} + \frac{x_i^4}{24}\right) + \frac1{N^2} \sum_{\bx, \by \in X} K_2^d(\bx, \by) 
	\end{align*}
	and for $j=3$ by
	\begin{align*}
		\left(\frac{13}{10}\right)^d - \frac2N \sum_{\bx \in X} \prod_{i=1}^d \left(1 + \frac{x_i}{2} + \frac{x_i^2}4 - \frac{x_i^3}{6} + \frac{x_i^4}{24}\right) + \frac1{N^2} \sum_{\bx, \by \in X} K_3^d(\bx, \by). 
	\end{align*}
	Given a point set $X$ it is thus easy to calculate the worst case error with respect to a given norm as the square root of these expressions.
	
	For later use we will also introduce the corresponding space over $\bbR^d$. The Sobolev space $\Hmix^2(\bbR^d)$ is defined analogously as the set of all functions $f: \bbR^d \rightarrow \bbR$ such that for all $e \sbse [d] \coloneqq \{1, \dots, d\}$ the weak partial derivatives
	$$
	f_e(\bx) \coloneqq \left(\prod_{i \in e} \frac{\partial^2}{\partial x_i^2}\right) f(\bx)
	$$
	are in $L_2(\bbR^d)$ (where $f_\es = f$). The norm is given by
	\begin{equation}\label{eq:10}
		\|f\|^2_{\Hmix^2(\bbR^d)} \coloneqq \sum_{e \sbse [d]} \|f_e\|^2_{L_2(\bbR^d)}
		\asymp \int_{\bbR^d} |\cF_d[f](\bs)|^2\prod\limits_{i=1}^d(1+|s_i|^2)^2\,\diffd\bs.
	\end{equation}
	
	Note, that we have in the sense of equivalent norms
	\begin{align}\label{eq:domain}
		\|f\|_{\Hmix^2([0, 1]^d)} \asymp \inf_{\substack{g \in \Hmix^2(\bbR^d) \\ g|_{[0, 1]^d} = f}} \|g\|_{\Hmix^2(\bbR^d)},
	\end{align}
	see \cite[Definition 1.56]{Tr10} with $\Omega = [0,1]^d$. In the mentioned reference $\Omega$ is supposed to be an open set. This does not play a role here since all appearing functions are continuous and therefore determined on the boundary.

	\section{The Faber basis} \label{HaarFaber}
	
	\subsection{The univariate Faber basis}
	
	\begin{figure}\label{fig:3}
		\centering
		\begin{tikzpicture}[scale=2.3]
			
			\draw[->] (-0.1,0.0) -- (1.1,0.0);
			\draw[->] (0.0,-0.1) -- (0.0,1.1); 
			
			\draw (1.0,0.03) -- (1.0,-0.03) node [below] {$1$};
			\draw (0.03,1.0) -- (-0.03,1.00) node [left] {$1$};
			\draw[->] (0.8,0.9) -- (0.7,0.7);
			
			\node at (0.8,1) {$v_{0,0}$};
			
			\node at (1.1,0.5) {$j=0$};
			
			\draw (0,0) -- (0.5,1);
			\draw (0.5,1) -- (1,0);
			
			\draw[->] (1.4,0.0) -- (2.6,0.0); 
			\draw[->] (1.5,-0.1) -- (1.5,1.1); 
			
			\draw (2.5,0.03) -- (2.5,-0.03) node [below] {$1$};
			\draw (1.53,1.0) -- (1.47,1.00) node [left] {$1$};
			\draw[->] (2.5,0.9) -- (2.4,0.7) ;
			\draw[->] (2,0.9) -- (1.9,0.7) ;
			
			\node at (2.5,1) {$v_{1,1}$};
			\node at (2,1) {$v_{1,0}$};
			
			\node at (2.6,0.5) {$j=1$};
			
			\draw (1.5,0,0) -- (1.75,1.0);
			\draw (1.75,1) -- (2,0.0);
			\draw (2,0) -- (2.25,1.0);
			\draw (2.25,1) -- (2.5,0.0);

			\draw[->] (2.9,0.0) -- (4.1,0.0); 
			\draw[->] (3.0,-0.1) -- (3.0,1.1);
			
			\draw (4.0,0.03) -- (4.0,-0.03) node [below] {$1$};
			\draw (3.03,1.0) -- (2.97,1.00) node [left] {$1$};
			
			\fill[gray!60] plot[domain=3:3.5] (\x,-6+2*\x)%
			-- plot[domain=4:3.5] (\x,0);%

			\node at (4.3,0.5) {$j\in\{0,1\}$};

			\draw (3,0) -- (3.5,1);
			\draw (3.5,1) -- (4,0);

			\draw (3.0,0,0) -- (3.25,1.0);
			\draw (3.25,1) -- (3.5,0.0);
			\draw (3.5,0) -- (3.75,1.0);
			\draw (3.75,1) -- (4,0.0);
			
		\end{tikzpicture}
		\caption{Univariate hierarchical Faber functions on $[0,1]$ for levels $j\in \{0,1\}$ and their union.} \label{fig_Faber1}
	\end{figure}
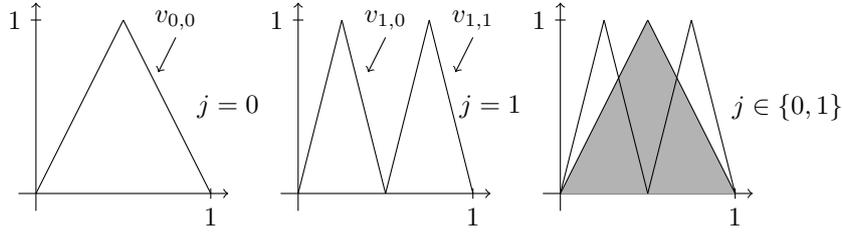
	
	Let us briefly recall the basic facts about the Faber basis taken from
	\cite[3.2.1, 3.2.2]{Tr10}. Faber \cite{Fa09} observed that every continuous
	(nonperiodic) function $f$ on $[0,1]$ can be represented (pointwise) as
	\begin{equation}\label{f51}
		f(x) = f(0)\cdot (1-x)+f(1)\cdot x -
		\frac{1}{2}\sum\limits_{j=0}^{\infty}\sum\limits_{k=0}^{2^j-1}
		\Delta^2_{2^{-j-1}}f(2^{-j}k)v_{j,k}(x),
	\end{equation}
	where
	\begin{equation}\label{eq:4}
		\Delta^2_hf(t) := f(t+2h)-2f(t+h)+f(t) \quad  (t,h\in \mathbb{R})
	\end{equation}
	denotes the second order difference operator at point $t$ with steplength $h$ and
	\begin{align*}
		v_{j, k}(x) \coloneqq \begin{cases}
			2^{j+1}(x-2^{-j}k) & , 2^{-j}k \leq x \leq 2^{-j}(k+1/2) \\
			2^{j+1}(2^{-j}(k+1)-x) & , 2^{-j}(k+1/2)\leq x \leq 2^{-j}(k+1) \\
			0 & , \text{else}
		\end{cases}
	\end{align*}
	for $j \geq 0$. The representation \eqref{f51} converges at least pointwise. In particular, every periodic function on
	$C(\bbT)$ can be represented by
	\begin{equation}\label{f5}
		f(x) = f(0) - \frac{1}{2}\sum\limits_{j=0}^{\infty}\sum\limits_{k=0}^{2^j-1}
		\Delta^2_{2^{-j-1}}f(2^{-j}k)v_{j,k}(x).
	\end{equation}
	\begin{defi} \label{def:univar_Faber}
		The \emph{univariate Faber basis over $\bbR$, $[0, 1]$, $\bbT$} is given, respectively, by
		\begin{align*}
			\Fab(\bbR) & \coloneqq \{v_{j, k}: j \in \bbN_{-1}, k \in \bbZ\}, \\
			\Fab({[0, 1]}) & \coloneqq \{v_{j, k}: j \in \bbN_{-1}, k \in \bbD_j\}, \\
			\Fab({\bbT}) & \coloneqq \{v_{j, k}^{\text{per}}: j \in \bbN_{-1}, k \in \bbD_j^{\text{per}}\},
		\end{align*}
		where we define the index sets $\bbD_j \coloneqq \bbD_j^{\text{per}} \coloneqq \{0, 1, \dots, 2^j-1\}$ for $j \geq 0$, as well as $\bbD_{-1} \coloneqq \{0, 1\}$ and $\bbD_{-1}^{\text{per}} \coloneqq \{0\}$. For $j \geq 0, k \in \bbD_j$ we set $v_{j, k}^{\text{per}} = v_{j, k}$ as defined above and for $j = -1$ we set
		$$
		v_{-1, k}(x) \coloneqq (1-|x-k|)_+ \quad (k \in \bbZ)
		$$
		and
		$$
		v_{-1, 0}^{\text{per}}(x) \coloneqq 1.
		$$
	\end{defi}
	
	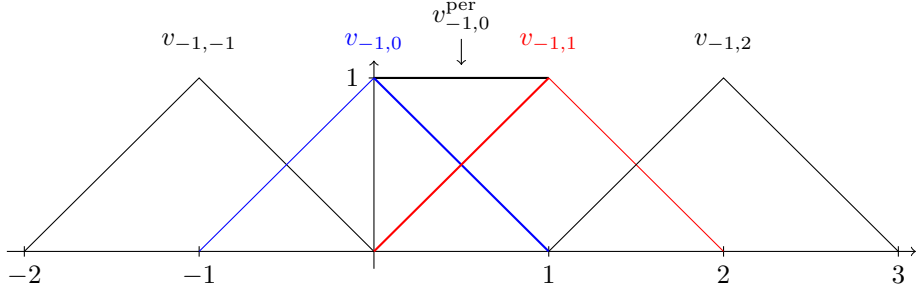
\begin{figure}
		\centering
		\begin{tikzpicture}[scale=2.3]
			
			\draw[->] (-2.1,0.0) -- (3.1,0.0);
			\draw[->] (0.0,-0.1) -- (0.0,1.1); 
			
			\draw (-2.0,0.03) -- (-2.0,-0.03) node [below] {$-2$};
			\draw (-1.0,0.03) -- (-1.0,-0.03) node [below] {$-1$};
			\draw (1.0,0.03) -- (1.0,-0.03) node [below] {$1$};
			\draw (2.0,0.03) -- (2.0,-0.03) node [below] {$2$};
			\draw (3.0,0.03) -- (3.0,-0.03) node [below] {$3$};
			\draw (0.03,1.0) -- (-0.03,1.00) node [left] {$1$};
			
			\draw[thick] (0.0, 1.0) -- (1.0, 1.0);
			\node at (0.5,1.35) {$v_{-1, 0}^{\text{per}}$};
			\draw[->] (0.5, 1.225) -- (0.5, 1.075);
			
			\draw (-2.0, 0.0) -- (-1.0, 1.0) -- (0.0, 0.0);
			\node at (-1.0, 1.2) {$v_{-1, -1}$};
			
			\draw[blue] (-1.0, 0.0) -- (0.0, 1.0);
			\draw[blue, thick] (0.0, 1.0) -- (1.0, 0.0);
			\node at (0.0, 1.2) {\textcolor{blue}{$v_{-1, 0}$}};
			
			\draw[red] (1.0, 1.0) -- (2.0, 0.0);
			\draw[red, thick] (0.0, 0.0) -- (1.0, 1.0);
			\node at (1.0, 1.2) {\textcolor{red}{$v_{-1, 1}$}};
			
			\draw (1.0, 0.0) -- (2.0, 1.0) -- (3.0, 0.0);
			\node at (2.0, 1.2) {$v_{-1, 2}$};
		\end{tikzpicture}
		\caption{The Faber basis functions of level $j = -1$ in the periodic and nonperiodic setting.} \label{fig_Faber-1}
	\end{figure}
	
	Of course, for $\Fab({[0, 1]})$ and $\Fab(\bbT)$ we restrict the functions $v_{j, k}$ to the domain $[0, 1]$ and $\bbT$ respectively. Indeed, one easily sees that
	$$
	\bbD_j = \{k \in \bbZ: (0, 1) \cap \supp v_{j, k} \neq \es\}.
	$$
	
	\subsection{The tensor Faber basis}
	\label{sec:tfaber}
	Let now $f(x_1,\dots,x_d)$ be a $d$-variate continuous function $f$ on $[0,1]^d$. By fixing all variables except $x_i$
	we obtain by $g(\cdot) = f(x_1,\dots,x_{i-1},\cdot,x_{i+1},\dots,x_d)$ a univariate periodic
	continuous function. By applying \eqref{f5} in every such component we obtain the point-wise
	representation
	\begin{align*}\label{repr}
		f(\bx) = \sum\limits_{\bj\in \bbN_{-1}^d} \sum\limits_{\bk\in \bbD_{\bj}} d_{\bj,\bk}(f)
		v_{\bj,\bk}(\bx) \quad (\bx\in [0,1]^d),
	\end{align*}
	where 
	$$
	v_{\bj,\bk}(x_1,\dots,x_d) \coloneqq v_{j_1,k_1}(x_1) \dots v_{j_d,k_d}(x_d) \quad (\bj\in \bbN_{-1}^d, \bk\in \bbD_\bj)
	$$
	and for $f\in C(\bbR^d)$ as well as $\bj\in \bbN_{-1}^d$ and $\bk\in \bbZ^d$ we put
	\begin{equation}\label{f100}
		d_{\bj,\bk}(f) = d^{(1)}_{j_1,k_1} \cdots  d^{(d)}_{j_d,k_d}(f) 
	\end{equation}
	with $d^{(i)}_{\ell, m}:C_b(\bbR^d) \to C_b(\bbR^d)$ defined by
	$$
	d^{(i)}_{\ell,m}f(\bx) = \begin{cases}
		-\frac{1}{2}\Delta^{2,i}_{2^{-{\ell-1}}}f(x_1,...,x_{i-1},2^{-\ell}m,x_{i+1},...,x_d)& ,\ell \geq 0, m\in \mathbb{Z} \\
		f(x_1,...,x_{i-1},m,x_{i+1},...,x_d) & ,\ell=-1, m\in \mathbb{Z},
	\end{cases}
	$$
	where $\Delta^{2,i}_{2^{-\ell-1}}f$ denotes the univariate difference operator $\Delta^2_h$ with steplength $h=2^{-{(\ell+1)}}$ given by \eqref{eq:4} applied to the $i$-th component $x_i$ of the $d$-variate continuous function $f$ with all the other variables kept fix.
	
	Analogous to Definition \ref{def:univar_Faber} we can define the \emph{tensor product Faber basis over $\bbR^d$, $[0, 1]^d$, $\bbT^d$}, respectively, by
	\begin{align*}
		\Fab({\bbR^d}) & \coloneqq \{v_{\bj, \bk}: \bj \in \bbN_{-1}^d, \bk \in \bbZ^d\}, \\
		\Fab({[0, 1]^d}) & \coloneqq \{v_{\bj, \bk}: \bj \in \bbN_{-1}^d, \bk \in \bbD_\bj\}, \\
		\Fab({\bbT^d}) & \coloneqq \{v_{\bj, \bk}^{\text{per}}: \bj \in \bbN_{-1}^d, \bk \in \bbD_\bj^{{\text{per}}}\},
	\end{align*}
	where $\bbD_\bj \coloneqq \prod_{i=1}^d \bbD_{j_i}$ and analogously
	$$
	v_{\bj, \bk}^{\text{per}} (x_1, \dots, x_d) \coloneqq \prod_{i=1}^d v_{j_i, k_i}^{\text{per}}(x_i) \quad \text{for} \quad \bk \in \bbD_\bj^{\text{per}} \coloneqq \prod_{i=1}^d \bbD_{j_i}^{\text{per}}.
	$$
	
	Later on we will be interested in function spaces consisting of functions with a prescribed decay of their Faber coefficients. Our precise definitions go as follows.
	
	
	\begin{defi} \label{def:seq_space}
		Define the Banach spaces
		\begin{align*}
			\seqsp(\bbR^d) &\coloneqq \left\{ f  \in C_b(\bbR^d): \|f\|_{\seqsp(\bbR^d)} < \infty\right\},\\
			\seqsp([0, 1]^d) &\coloneqq \left\{ f \in C_b([0,1]^d):  \|f\|_{\seqsp([0,1]^d)} < \infty \right\},\\
			\seqsp(\bbT^d) &\coloneqq \left\{ f  \in C_b(\bbT^d): \|f\|_{\seqsp(\bbT^d)} < \infty \right\},
		\end{align*}
		with corresponding norms given by
		\begin{align*}
			\|f\|_{\seqsp(\bbR^d)} &\coloneqq \sup_{\bj \in \bbN_{-1}^d} 2^{ 3|\bj|_1/2} \left(\sum_{\bk \in \bbZ^d} |d_{\bj, \bk}(f)|^2\right)^{1/2},\\
			\|f\|_{\seqsp([0,1]^d)} &\coloneqq \sup_{\bj \in \bbN_{-1}^d} 2^{ 3|\bj|_1/2} \left(\sum_{\bk \in \bbD_{\bj}} |d_{\bj, \bk}(f)|^2\right)^{1/2},\\
			\|f\|_{\seqsp(\bbT^d)} &\coloneqq \sup_{\bj \in \bbN_{-1}^d} 2^{ 3|\bj|_1/2} \left(\sum_{\bk \in \bbD_{\bj}^{\text{per}}} |d_{\bj, \bk}(f)|^2\right)^{1/2}.
		\end{align*}
	\end{defi}
	
	The term ``dyad'' indicates that the functions are sampled only at points of the dyadic grid. Note the continuous, isomorphic embedding
	\begin{align} \label{eq:seqsp_embeddings}
		\seqsp([0, 1]^d) \hookrightarrow \seqsp(\bbR^d).
	\end{align}
	Also note the continuous embeddings
	\begin{align} \label{eq:continuous_embedding}
		\seqsp(\Omega) \hookrightarrow C_b(\Omega),
	\end{align}
	where $\Omega \in \{\bbR^d, [0, 1]^d, \bbT^d\}$ and $C_b(\Omega)$ is equipped with the $\sup$-norm $\|\cdot\|_\infty$. Indeed, every function $f: \Omega \rightarrow \bbR$ of the form
	$$
	f(\bx) = \sum_{\bj \in \bbN_{-1}^d} \sum_{\bk} d_{\bj, \bk}(f) v_{\bj, \bk}(\bx),
	$$
	whose coefficients decay as prescribed by $\seqsp(\Omega)$ (where the sums over $\bk$ range over the respective index sets) fulfills
	\begin{align*}
		\|f\|_\infty & \leq \sum_{\bj \in \bbN_{-1}^d} \left\|\sum_{\bk} d_{\bj, \bk}(f) v_{\bj, \bk}\right\|_\infty \lesssim \sum_{\bj \in \bbN_{-1}^d} \sup_{\bk} |d_{\bj, \bk}(f)| \\
		& \leq \sum_{\bj \in \bbN_{-1}^d} \left(\sum_{\bk} |d_{\bj, \bk}(f)|^2\right)^{1/2} \leq \sum_{\bj \in \bbN_{-1}^d} 2^{-3|\bj|_1/2} \|f\|_{\seqsp(\Omega)}
	\end{align*}
	(using the fact that for a given $\bj$ and every $\bx$ there are at most $2^d$ indices $\bk$ with $v_{\bj, \bk}(\bx) \neq 0$). Since the appearing geometric series converges to a constant we obtain the desired embedding \eqref{eq:continuous_embedding}.
	
	The following result is already implicitly contained in \cite[Lemma 3.2, Lemma 3.3, (3.26)]{BuGr04}. For the convenience of the reader we give a complete proof. 
	
	\begin{thm} \label{thm:FS_charact.}For any function $f \in \Hmix^2([0,1]^d)$ it holds 
		$$
		\|f\|_{\seqsp([0, 1]^d)} = \sup\limits_{\bj \in \bbN_{-1}^d} 2^{ 3|\bj|_1/2} \left(\sum_{\bk \in \bbD_{\bj}} |d_{\bj, \bk}(f)|^2\right)^{1/2} 
		\lesssim \|f\|_{\Hmix^2([0, 1]^d)}.
		$$
	\end{thm}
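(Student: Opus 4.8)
Here is how I would approach the proof.

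\medskip

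\textbf{Reduction to $\bbR^d$.} The natural plan is to pass to the whole space and to prove the stronger bound \eqref{eq:18}. Given $f \in \Hmix^2([0,1]^d)$, I would invoke \eqref{eq:domain} to choose an extension $g \in \Hmix^2(\bbR^d)$ with $g|_{[0,1]^d} = f$ and $\|g\|_{\Hmix^2(\bbR^d)} \lesssim \|f\|_{\Hmix^2([0,1]^d)}$ (note $\Hmix^2(\bbR^d) \hookrightarrow C_b(\bbR^d)$, so all Faber coefficients are well defined). For every $\bj$ and every $\bk \in \bbD_{\bj}$ the functional $d_{\bj,\bk}$ of \eqref{f100} only evaluates the function at dyadic points lying in $[0,1]^d$ (for $j_i\ge 0$ the three abscissae of the second difference \eqref{eq:4} stay in $[2^{-j_i}k_i,2^{-j_i}(k_i+1)]\subseteq[0,1]$, and for $j_i=-1$ one takes a trace at $k_i\in\{0,1\}$), hence $d_{\bj,\bk}(f)=d_{\bj,\bk}(g)$ there. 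Enlarging the index set from $\bbD_{\bj}$ to $\bbZ^d$ in the inner sum then gives $\|f\|_{\seqsp([0,1]^d)}\le\|g\|_{\seqsp(\bbR^d)}$, so it suffices to establish \eqref{eq:18} for $g$.

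\textbf{One-dimensional building blocks.} For a univariate $H^2(\bbR)$ function and a level $j\ge 0$ I would start from the integral identity
\[
\Delta^2_{h} g(t) = \int_0^{h}\!\!\int_0^{h} g''(t+s+u)\,\diffd s\,\diffd u,\qquad h=2^{-j-1},
\]
which rewrites the second difference \eqref{eq:4} through the weak second derivative. Applying Cauchy--Schwarz on $[0,h]^2$ (whose image under $(s,u)\mapsto s+u$ carries a triangular weight bounded by $h$) yields $|d_{j,k}(g)|^2\lesssim 2^{-3j}\int_{I_{j,k}}|g''|^2$ with $I_{j,k}\coloneqq[2^{-j}k,2^{-j}(k+1)]$. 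Since the $I_{j,k}$, $k\in\bbZ$, tile $\bbR$ with disjoint interiors, summation gives $\sum_{k\in\bbZ}|d_{j,k}(g)|^2\lesssim 2^{-3j}\|g''\|_{L_2(\bbR)}^2$, i.e.\ precisely the decay $2^{3j/2}(\sum_k|d_{j,k}|^2)^{1/2}\lesssim\|g''\|_{L_2(\bbR)}$. For the level $j=-1$, where $d_{-1,k}(g)=g(k)$, I would use the sampling estimate $\sum_{k\in\bbZ}|g(k)|^2\lesssim\|g\|_{H^1(\bbR)}^2$ (cover $\bbR$ by the unit intervals $[k-\tfrac12,k+\tfrac12]$ and apply the one-dimensional Sobolev embedding on each) and then eliminate the first-derivative term via the interpolation inequality $\|g'\|_{L_2(\bbR)}^2\le\|g\|_{L_2(\bbR)}\|g''\|_{L_2(\bbR)}$, obtaining $\sum_{k}|g(k)|^2\lesssim\|g\|_{L_2(\bbR)}^2+\|g''\|_{L_2(\bbR)}^2$; the weight $2^{3|{-1}|/2}$ is a constant.

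\textbf{Tensorization.} Fix $\bj$ and set $e=\{i:j_i\ge 0\}$. By \eqref{f100} the coefficient $d_{\bj,\bk}(g)$ is a composition of the one-dimensional functionals acting on separate variables, which commute with one another and with the partial derivatives in the remaining variables. I would apply the two univariate bounds coordinate by coordinate, freely interchanging the nonnegative sums over $k_i$ with the integrals over the other variables by Tonelli. Each $i\in e$ contributes a factor $2^{-3j_i}$ and forces the derivative $\partial_{x_i}^2$, while each $i\notin e$ contributes a constant together with the choice of either $\partial_{x_i}^2$ or the identity. Collecting all these choices produces
\[
\sum_{\bk\in\bbZ^d}|d_{\bj,\bk}(g)|^2\;\lesssim\;\Big(\prod_{i\in e}2^{-3j_i}\Big)\sum_{e'\subseteq[d]\setminus e}\|g_{e\cup e'}\|_{L_2(\bbR^d)}^2 ,
\]
with $g_{e''}=\prod_{i\in e''}\partial_{x_i}^2 g$ as in \eqref{eq:10}. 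Because $2^{3|\bj|_1}\prod_{i\in e}2^{-3j_i}=2^{3|[d]\setminus e|}\le 2^{3d}$ and each $\|g_{e\cup e'}\|_{L_2(\bbR^d)}^2\le\|g\|_{\Hmix^2(\bbR^d)}^2$, multiplying by $2^{3|\bj|_1}$ gives a bound $\lesssim\|g\|_{\Hmix^2(\bbR^d)}^2$ that is uniform in $\bj$; the supremum over $\bj$ and a square root then close the argument.

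\textbf{Main obstacle.} The integral representation and the two univariate inequalities are routine. The delicate part will be the bookkeeping in the tensorization with mixed levels: one must correctly track, for the level-$(-1)$ coordinates, the branching between the identity and the second derivative, verify that \emph{every} resulting term is one of the $\|g_{e''}\|_{L_2}^2$ entering the norm \eqref{eq:10} (so that no odd-order derivatives survive, which is exactly what the interpolation inequality secures), and check that the powers of $2$ from the coordinates in $e$ cancel the weight $2^{3|\bj|_1}$ up to a dimensional constant.
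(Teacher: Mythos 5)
Your proposal is correct, and it follows the paper's outer structure: the reduction to $\bbR^d$ via an extension as in \eqref{eq:domain}, the observation that the coefficients for $\bk \in \bbD_{\bj}$ only sample inside $[0,1]^d$ (this is \eqref{eq:seqsp_embeddings}), and, for the coordinates with $j_i \geq 0$, the same workhorse estimate --- an integral representation of the second difference through $g''$ plus Cauchy--Schwarz on the dyadic interval, which is exactly the paper's Step 1 in the proof of Proposition \ref{prop:Faber}. Where you genuinely diverge is the treatment of the level $j_i = -1$ coordinates, i.e.\ the point evaluations at integers. The paper handles these with Fourier analysis: a Littlewood--Paley-type decomposition of the trace into dyadic frequency blocks, the multivariate Shannon sampling theorem to convert sums over integer samples into $L_2$-norms of the blocks, Plancherel with the weights $(1+|s_i|^2)^2$, and a concluding geometric series; since this machinery is first run on Schwartz functions, a density argument (Step 5) finishes the proof. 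You instead use the elementary trace estimate $\sum_{k\in\bbZ}|g(k)|^2 \lesssim \|g\|^2_{L_2(\bbR)} + \|g'\|^2_{L_2(\bbR)}$ (Sobolev embedding on unit intervals) combined with the Landau interpolation inequality $\|g'\|^2_{L_2(\bbR)} \leq \|g\|_{L_2(\bbR)}\|g''\|_{L_2(\bbR)}$, which eliminates the odd-order derivative that does not appear in the mixed norm --- precisely the role played in the paper by the Fourier weights. Your route is shorter and more self-contained (no sampling theorem, no frequency decomposition, no geometric series); the paper's Fourier route is the one that generalizes to fractional smoothness and to the $L_p$-setting $S^2_pW(\bbR^d)$ mentioned in the remark following Proposition \ref{prop:Faber}, which is presumably why the authors chose it. One point you should make explicit to be fully rigorous: applying the univariate identities coordinate-by-coordinate requires knowing that traces of $g \in \Hmix^2(\bbR^d)$ at fixed values of some variables lie in the mixed Sobolev space of the remaining variables (this follows from the tensor structure, e.g.\ $\Hmix^2(\bbR^d) \hookrightarrow C_b\bigl(\bbR_{x_i}; \Hmix^2(\bbR^{d-1})\bigr)$), or alternatively you should first argue for Schwartz functions and append the same density step the paper uses; either patch is routine and does not affect the validity of your scheme.
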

	
	This result follows from \eqref{eq:seqsp_embeddings} and its $\bbR^d$-version below using \eqref{eq:domain}. 
	
	\begin{prop}\label{prop:Faber} Let $\Hmix^2(\bbR^d)$ be normed as in \eqref{eq:10} and $f\in \Hmix^2(\bbR^d)$\,. Then  
		\begin{equation}\label{eq:8}
			\|f\|_{\seqsp(\bbR^d)} = \sup\limits_{\bj \in \bbN_{-1}^d} 2^{ 3|\bj|_1/2} \left(\sum_{\bk \in \bbZ^d} |d_{\bj, \bk}(f)|^2\right)^{1/2} 
			\lesssim \|f\|_{\Hmix^2(\bbR^d)}.
		\end{equation}
	\end{prop}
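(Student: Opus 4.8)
The plan is to exploit the tensor-product structure of both the Faber coefficients \eqref{f100} and the norm \eqref{eq:10}, reducing everything to two univariate building blocks that are then combined coordinate by coordinate. The starting observation is the exact integral representation of the second difference: for $h>0$ one has $\Delta^2_h g(t) = \int_0^{2h} g''(t+s)\,\Lambda_h(s)\,\diffd s$, where $\Lambda_h$ is the hat function supported on $[0,2h]$ with $\Lambda_h(s)=s$ on $[0,h]$ and $\Lambda_h(s)=2h-s$ on $[h,2h]$ (checked on quadratics, then extended, or obtained by integrating by parts twice). Consequently, for a level $\bj$ with steplengths $h_i=2^{-(j_i+1)}$ in the active coordinates $e=\{i:j_i\ge 0\}$, applying this in each coordinate $i\in e$ rewrites $d_{\bj,\bk}(f)$ as a weighted integral of the mixed derivative $f_e$ over a box of sidelengths $2h_i$, while the coordinates $i$ with $j_i=-1$ remain point evaluations at the integer $k_i$.

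Next I fix $\bj$ and estimate $2^{3|\bj|_1}\sum_{\bk\in\bbZ^d}|d_{\bj,\bk}(f)|^2$. For the active coordinates $e$ I apply Cauchy--Schwarz against the weight $\prod_{i\in e}\Lambda_{h_i}$, using $\int_0^{2h_i}\Lambda_{h_i}=h_i^2$ and $\Lambda_{h_i}\le h_i$; this produces the factor $\prod_{i\in e}h_i^3=2^{-3\sum_{i\in e}(j_i+1)}$ times the plain integral of $f_e^2$ over the box. Summing over $\bk_e$ is then exact: the boxes $\prod_{i\in e}[2^{-j_i}k_i,2^{-j_i}(k_i+1)]$ tile $\bbR^{e}$, so the $\bk_e$-sum turns the box integral into $\int_{\bbR^e}f_e(\cdot,\bk_{e^c})^2\,\diffd x_e$ with the $e^c$-coordinates frozen at the integers $\bk_{e^c}$. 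It remains to sum over $\bk_{e^c}$, for which I use the sampling estimate $\sum_{k\in\bbZ}|G(k)|^2\lesssim\|G\|_{L_2(\bbR)}^2+\|G''\|_{L_2(\bbR)}^2$ (valid since $(1+\xi^2)^2\ge 1+\xi^2$ gives $\|G\|_{H^1}^2\lesssim\|G\|_{L_2}^2+\|G''\|_{L_2}^2$, and point values in one variable are controlled by the $H^1$ norm), applied one $e^c$-coordinate at a time and integrated over the remaining variables by Tonelli. Each such step moves a coordinate from ``sampled'' to ``integrated over $\bbR$'', keeping either no derivative or a second derivative in that variable, so iterating over $i\in e^c$ yields $\sum_{\bk_{e^c}}\int_{\bbR^e}f_e(\cdot,\bk_{e^c})^2\lesssim\sum_{e'\sbse e^c}\|f_{e\cup e'}\|_{L_2(\bbR^d)}^2$. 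Finally, since $|\bj|_1=\sum_{i\in e}j_i+|e^c|$, the exponents combine to $3|\bj|_1-3\sum_{i\in e}(j_i+1)=3(|e^c|-|e|)\le 3d$, so $2^{3|\bj|_1}\sum_{\bk}|d_{\bj,\bk}(f)|^2\lesssim\sum_{e'\sbse e^c}\|f_{e\cup e'}\|_{L_2}^2\le\|f\|_{\Hmix^2(\bbR^d)}^2$ uniformly in $\bj$, which is \eqref{eq:8}.

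The step I expect to be the main obstacle---and the reason the bound holds at all---is the sampling sum over the level-$(-1)$ coordinates: one must control point evaluations using only \emph{second} derivatives in those coordinates, never first ones. This is forced by the structure of $\Hmix^2(\bbR^d)$, whose norm \eqref{eq:10} only sees mixed derivatives of order $0$ or $2$ in each variable; a naive sampling bound through $H^1$ in an $e^c$-coordinate would require $\partial_{x_i}f_e$, which is not controlled. Using the second-derivative sampling estimate instead produces exactly the terms $f_{e\cup e'}=\prod_{i\in e\cup e'}\partial_{x_i}^2 f$, so the subsets generated by the iteration assemble precisely into the summands of $\|f\|_{\Hmix^2(\bbR^d)}^2$. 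Care is also needed that all interchanges of the (nonnegative) sums and integrals are justified by Tonelli, and that the pointwise manipulations of \eqref{f100} are first carried out for smooth $f$ and then extended; this is legitimate since $\Hmix^2(\bbR^d)\hookrightarrow C_b(\bbR^d)$ by \eqref{eq:10}, the coefficients $d_{\bj,\bk}$ depend continuously on $f$, and the fixed-$\bj$ inequality passes to limits by Fatou before taking the supremum over $\bj$.
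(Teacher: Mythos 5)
Your proof is correct, and its core differs genuinely from the paper's. Both arguments share the same skeleton: the exact representation of $\Delta^2_h g$ as an integral of $g''$ against a hat-function weight (the paper's identity \eqref{eq:1}), Cauchy--Schwarz in the active coordinates producing the factor $\prod_{i\in e}h_i^3$, the exact tiling sum over $\bk_e$, and a concluding Schwartz-density argument (your Fatou-plus-coefficient-continuity limiting step is a fine alternative to the paper's completeness-of-$\seqsp(\bbR^d)$ argument). The genuine difference is in how the coordinates frozen at integers are handled. The paper decomposes $f^{(2,\dots,2,0,\dots,0)}$ into dyadic Fourier blocks $g_{\bl}$, applies Shannon's sampling theorem and Plancherel to each band-limited block, and sums a geometric series in $\bl$ (its Steps 2--4); you instead invoke the elementary univariate trace estimate $\sum_{k\in\bbZ}|G(k)|^2\lesssim\|G\|_{L_2(\bbR)}^2+\|G'\|_{L_2(\bbR)}^2$ combined with $\|G'\|_{L_2(\bbR)}^2\le\|G\|_{L_2(\bbR)}\|G''\|_{L_2(\bbR)}$, iterated one coordinate of $e^c$ at a time, so that the generated subsets $e'\sbse e^c$ assemble exactly into the summands $\|f_{e\cup e'}\|_{L_2(\bbR^d)}^2$ of \eqref{eq:10}. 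Your route is more elementary and self-contained: it needs no band-limited decomposition and can avoid Fourier analysis altogether, since the second inequality follows from integration by parts. What the paper's Littlewood--Paley/Shannon machinery buys is generality: it is exactly what extends to the $L_p$ statement ($1<p<\infty$) in the remark following the proposition and to fractional smoothness, where the Hilbert-space shortcuts you use are unavailable. One side remark of yours is inaccurate, though harmless for your proof: you claim an $H^1$-based sampling bound would fail because $\partial_{x_i}f_e$ ``is not controlled'' by \eqref{eq:10}. In fact mixed derivatives of first order in some variables \emph{are} controlled, via the Fourier-side equivalence in \eqref{eq:10}; indeed the paper's own Step 4 produces all terms with $\bbeta\le\mathbf{2}$, including first-order ones, and bounds them precisely this way. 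So restricting to derivative orders in $\{0,2\}$ is an aesthetic advantage of your bookkeeping, not a logical necessity.
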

	
	\begin{rem}
		In the case of $1/2<s<2$ we have a somewhat stronger relation, see \cite{HMOU2016} and \cite{Tr10}, namely
		$$
		\left(\sum\limits_{\bj \in \bbN_{-1}^d} 2^{ 2(s-1/2)|\bj|_1} \sum_{\bk \in \bbZ^d} |d_{\bj, \bk}(f)|^2\right)^{1/2} \asymp \|f\|_{\Hmix^s(\bbR^d)}.
		$$
		Note that for fractional $s>0$ the space $\Hmix^s$ cannot be defined via weak derivatives. One would rather use Fourier analytic building blocks, see \cite{ST1987}.
	\end{rem}
	
	\begin{proof}[Proof of Proposition \ref{prop:Faber}] {\em Step 1. (Reduction to second derivatives)} By a straight-forward calculation we obtain that for a function $g \in L_2(\bbR)$ having two weak derivatives in $L_2(\bbR)$, that is $g \in H^2(\bbR)$, and for $j \in \bbN_0$ and $k\in \bbZ$ 
		\begin{equation}\label{eq:1}
			\Delta^2_{2^{-j-1}}g(2^{-j}k) = 2^{-j}\int_{\bbR}v_{j,k}(t)g^{(2)}(t)\,\diffd t.
		\end{equation}
		We first show the estimate for functions $f$ from the Schwartz class $S(\bbR^d)$, so that we may apply \eqref{eq:1} in every component of $f$. This will help us control the coefficient $|d_{\bj,\bk}(f)|$. It turns out to be rather technical in full generality. So we only consider those $\bj$, where $j_i\geq 0$ for $i=1,...,m$ and $j_i = -1$ for $i=m+1,...,d$. As it will turn out later there is no loss of generality. In fact, we obtain from the identity \eqref{eq:1} and the Cauchy--Schwarz inequality
		$$
		|\Delta^2_{2^{-j-1}}g(2^{-j}k)| \leq 2^{-3j/2}\left(\int_{I_{j,k}} \left|g^{(2)}(t)\right|^2\,dt\right)^{1/2}.
		$$
		We denote with $I_{j,k}$ the dyadic interval $[2^{-j}k,2^{-j}(k+1))$.
		This transfers to the particularly chosen $\bj\in \bbN_{-1}^d$ as follows
		\begin{align*}
			& 2^{3|\bj|_1/2}\left(\sum\limits_{\bk \in \bbZ^d}|d_{\bj,\bk}(f)|^2\right)^{1/2} \\
			\lesssim & \Bigg(\sum\limits_{\bk \in \bbZ^d}\int_{I_{j_1,k_1}}\cdots \int_{I_{j_m,k_m}}\left|f^{(2,\dots,2,0,\dots,0)}(t_1,\dots,t_m,k_{m+1},\dots,k_d)\right|^2 \\
			& ~~~~~~~~~~~~~~~~~~~~~~~~~~~~~~~~~\diffd t_1\dots \diffd t_m\Bigg)^{1/2}.
		\end{align*}
		For $m=d$ the right hand side is equal to $\|f^{(2,\dots,2)}\|_{L_2(\bbR^d)}$ and we are done. For $m<d$ we have to take care of the point evaluations.
		
		{\em Step 2. (Decomposition of the Fourier spectrum)} Now we put $g:=f^{(2,\dots,2,0,\dots,0)}$, fix $t_1\in I_{j_1,k_1},\dots,t_m \in I_{j_m,k_m}$ and decompose the function 
		$$
		g(t_1,\dots,t_m,\cdot,\dots,\cdot)= f^{(2,\dots,2,0,\dots,0)}(t_1,\dots,t_m,\cdot,\dots,\cdot),
		$$
		which is smooth and maps $\bbR^{d-m}$ to $\bbC$, into Fourier analytic building blocks 
		\begin{align}\label{eq_fourierblocks}
			g(t_1,\dots,t_m,\cdot,\dots,\cdot) = \sum\limits_{\bl \in \bbN_0^{d-m} } g^{}_{\bl}(t_1,\dots,t_m,\cdot,\dots,\cdot), 
		\end{align}
		where $g^{}_{\bl}(t_1,\dots,t_m,\cdot,\dots,\cdot) \coloneqq \mathcal{F}_{d-m}^{-1}[\chi^{}_{J_{\bl}}\mathcal{F}_{d-m}  [g(t_1,\dots,t_m,\cdot,\dots,\cdot)]]$ with 
		$$
		J_{\bl}:= J_{\ell_1}\times...\times J_{\ell_ {d-m}}
		$$
		and $J_{0} = (-1,1)$, $J_{\ell} = (-2^{\ell},-2^{\ell-1}] \cup [2^{\ell-1},2^{\ell})$ if $\ell\geq 1$. The convergence takes place in $L_2(\bbR^{d-m})$. 
		Using triangle inequality we obtain 
		\begin{align} \label{eq:3}
			\begin{split}
				&2^{3|\bj|_1/2}\left(\sum\limits_{\bk \in \bbZ^d}|d_{\bj,\bk}(f)|^2\right)^{1/2} \\
				\lesssim & \sum\limits_{\bl \in \bbN_0^{d-m}}\left(\sum\limits_{\bk \in \bbZ^d}\int_{I_{j_1,k_1}}\cdots \int_{I_{j_m,k_m}}|g_{\bl}(t_1,\dots,t_m,k_{m+1},\dots,k_d)|^2\,\diffd t_1 \dots \diffd t_m\right)^{1/2}.
			\end{split}
		\end{align}
		
		For fixed $t_1,\dots,t_m$ the function $g_{\bl}(t_1,\dots,t_m,\cdot,\dots,\cdot)$ has Fourier support in $[-2^{\ell_1},$ $2^{\ell_1}]\times \dots \times [-2^{\ell_{d-m}},2^{\ell_{d-m}}]$.
		
		{\em Step 3. (Application of Shannon's sampling theorem)} The multivariate and anisotropic version of Shannon's sampling theorem, see for instance \cite[1.5.5]{ST1987}, 
		gives
		\begin{align}\label{eq:2}
			\begin{split}
				& g^{}_{\bl}(t_1,\dots,t_m,x_1,\dots,x_{d-m}) \\
				= & \sum\limits_{k_1\in\bbZ} \dots \sum\limits_{k_{d-m}\in \bbZ} g^{}_{\bl}\left(t_1, \dots ,t_m,\frac{k_1}{2^{\ell_1}}, \dots ,\frac{k_{d-m}}{2^{\ell_{d-m}}}\right) \prod_{i=1}^{d-m} \sinc\left(2^{\ell_i}x_i - k_i\right).
			\end{split}
		\end{align}
		It is well-known that the system of $(d-m)$-variate functions 
		$$
		\left\{\prod_{i=1}^{d-m} 2^{\ell_i/2}\sinc\left(2^{\ell_i} x_i - k_i\right)\right\}_{\bk \in \bbZ^{d-m}}
		$$
		is orthonormal in $L_2(\bbR^{d-m})$. Hence we get from Parsevals's identity and \eqref{eq:2}
		\begin{align*}
			\begin{split}
				\sum\limits_{k_1\in\bbZ} \dots \sum\limits_{k_{d-m}\in \bbZ}
				2^{-\ell_1} \dots  2^{-\ell_{d-m}} & \left|g_{\bl}\left(t_1, \dots ,t_m,\frac{k_1}{2^{\ell_1}}, \dots ,\frac{k_{d-m}}{2^{\ell_{d-m}}}\right)\right|^2\\ 
				= & \|g_{\bl}(t_1,...,t_m,\cdot, \dots ,\cdot)\|^2_{L_2(\bbR^{d-m})}.
			\end{split}
		\end{align*}
		This gives in particular 
		\begin{align*}
			& \sum\limits_{k_1\in\bbZ} \dots \sum\limits_{k_{d-m}\in \bbZ}
			\left|g_{\bl}\left(t_1, \dots ,t_m,k_1, \dots ,k_{d-m}\right)\right|^2 \\
			\lesssim & 2^{|\bl|_1}\|g_{\bl}(t_1, \dots ,t_m,\cdot, \dots ,\cdot)\|^2_{L_2(\bbR^{d-m})}.
		\end{align*}
		Plugging this into \eqref{eq:3} yields
		\begin{align*}
			& 2^{3|\bj|_1/2}\left(\sum\limits_{\bk \in \bbZ^d}|d_{\bj,\bk}(f)|^2\right)^{1/2} \\
			\lesssim & \sum\limits_{\bl \in \bbN_0^{d-m}}\Bigg(\sum\limits_{\bk' \in \bbZ^{m}}\int_{I_{j_1,k'_1}}\cdots \int_{I_{j_m,k'_m}}\|g_{\bl}(t_1, \dots ,t_m,\cdot, \dots ,\cdot)\|_{L_2(\bbR^{d-m})}^2 \\
			& ~~~~~~~~~~~~~~~~~~~~~~~~~~~~~~~~~~~~~~~~~~~~ 2^{|\bl|_1} \diffd t_1 \dots \diffd t_m\Bigg)^{1/2} \\
			= & \sum\limits_{\bl \in \bbN_0^{d-m}}2^{-3|\bl|_1/2}\Bigg(\sum\limits_{\bk' \in \bbZ^{m}}\int_{I_{j_1,k'_1}}\cdots \int_{I_{j_m,k'_m}} \|g_{\bl}(t_1, \dots ,t_m,\cdot, \dots ,\cdot)\|_{L_2(\bbR^{d-m})}^2 \\
			& ~~~~~~~~~~~~~~~~~~~~~~~~~~~~~~~~~~~~~~~~~~~~~~~~~~~~~~~ 2^{4|\bl|_1} \diffd t_1 \dots \diffd t_m\Bigg)^{1/2}\,.
		\end{align*}
		
		{\em Step 4. (Final estimates for Schwartz functions)} For all $\bl \in \bbN_0^{d-m}$ we obtained 
		\begin{align*}
			& 2^{4|\bl|_1}\|g_{\bl}(t_1, \dots ,t_m,\cdot, \dots ,\cdot)\|_{L_2(\bbR^{d-m})}^2\\
			\leq & \sum\limits_{\bn \in \bbN_0^{d-m}}2^{4|\bn|_1}\|g_{\bn}(t_1, \dots ,t_m,\cdot, \dots ,\cdot)\|_{L_2(\bbR^{d-m})}^2\\
			\lesssim &  \sum\limits_{\bn \in \bbN_0^{d-m}}
			\int_{\bbR}\cdots \int_{\bbR} |\cF_{d-m}[g(t_1, \dots ,t_m,s_{m+1}, \dots ,s_d)]|^2 \chi^{}_{J_{\bn}}(\bs) \\
			& ~~~~~~~~~~~~~~~~~~~~ \prod\limits_{i=m+1}^{d}(1+|s_i|^2)^2 \, \diffd s_{m+1} \dots \diffd s_d\\
			= & \int_{\bbR}\cdots \int_{\bbR} |\cF_{d-m}[g(t_1, \dots ,t_m,s_{m+1}, \dots ,s_d)]|^2\prod\limits_{i=m+1}^{d}(1+|s_i|^2)^2 \, \diffd s_{m+1} \dots \diffd s_d,
		\end{align*}
		where we used Plancherel's identity in the second estimate and 
		$$
		2^{4|\bn|_1} \asymp \prod_{i=m+1}^{d}(1+|s_i|^2)^2
		$$
		for $\bs \in J_{\bn}$. By well-known properties of the Fourier transform the last expression is equivalent to 
		$$
		\sum\limits_{\underline{\beta} \leq \mathbf{2}} \left\|\frac{\partial^{\beta_1+ \dots +\beta_{d-m}}}{\partial x_1^{\beta_1} \dots \partial x_{d-m}^{\beta_{d-m}}}g(t_1, \dots ,t_m,\cdot, \dots ,\cdot)\right\|_{L_2(\bbR^{d-m})}^2,
		$$
		see \eqref{eq:10}, and hence
		\begin{align*}
			& 2^{3|\bj|_1/2}\left(\sum\limits_{\bk \in \bbZ^d}|d_{\bj,\bk}(f)|^2\right)^{1/2} \\
			\lesssim & \sum\limits_{\underline{\beta} \leq \mathbf{2}}\sum\limits_{\bl \in \bbN_0^{d-m}}2^{-3|\bl|_1/2}\Bigg(\sum\limits_{\bk' \in \bbZ^m}\int_{I_{j_1,k_1'}}\cdots \int_{I_{j_m,k_m'}} \\
			& \int_{\bbR}\cdots \int_{\bbR}|f^{(2, \dots ,2,\beta_1,\dots,\beta_{d-m})}(t_1, \dots ,t_m,t_{m+1}, \dots ,t_d)|^2 \diffd t_{m+1} \dots \diffd t_d \\
			& ~~~~~~~~~~~~~~~~~~~~~~~~~~~~~~~~~~~~~~~~~~~~~~~~~~~~~~~~~~~\diffd t_1 \dots \diffd t_m\Bigg)^{1/2}.
		\end{align*}
		The sum over $\bl$ is a multi-indexed geometric series and hence convergent. It remains to observe
		\begin{align*}
			& 2^{3|\bj|_1/2}\left(\sum\limits_{\bk \in \bbZ^d}|d_{\bj,\bk}(f)|^2\right)^{1/2} \\
			\lesssim & \sum\limits_{\underline\beta \leq \mathbf2}\left(\int_{\bbR}\cdots \int_{\bbR}|f^{(2, \dots ,2,\beta_1,\dots,\beta_{d-m})}(t_1, \dots ,t_d)|^2 \diffd t_{1} \dots \diffd t_d\right)^{1/2} \\
			\lesssim & \|f\|_{\Hmix^2(\bbR^d)}.
		\end{align*}
		
		{\em Step 5. (Lift to $\Hmix^2(\bbR^d)$ via density)} By a proper reordering of the variables (and integrals) we may prove the last inequality for any $\bj \in \bbN^d_{-1}$\,. Hence, we have shown the desired inequality in \eqref{eq:8} for smooth functions $f$ from the Schwartz space $S(\bbR^d)$. What remains follows from a density argument since the Schwartz functions are dense in $\Hmix^2(\bbR^d)$. In fact, let $f \in \Hmix^2(\bbR^d)$ and $(f_n)_{n \in \bbN}\subset S(\bbR^d)$ converge to $f$ in $\Hmix^2(\bbR^d)$. By \eqref{eq:8}, the sequence $(f_n)_{n \in \bbN}$ is Cauchy in $\seqsp(\bbR^d)$ and converges to $\tilde{f} \in \seqsp(\bbR^d)$. Both spaces, $\Hmix^2(\bbR^d)$ and $\seqsp(\bbR^d)$, are continuously embedded into $C_b(\bbR^d)$ (using classical embedding results for Sobolev spaces and \eqref{eq:continuous_embedding} for the dyadic space), so that $f$ and $\tilde{f}$ have to be continuous and 
		$$
		\|f-\tilde{f}\|_{\infty} \lesssim \|f-f_n\|_{\Hmix^2(\bbR^d)} +
		\|\tilde{f}-f_n\|_{\seqsp(\bbR^d)} \to 0 
		$$
		as $n$ goes to infinity. Thus $f$ and $\tilde{f}$ coincide pointwise. Finally, we have for any $n\in \bbN$ the relations 
		\begin{equation}\nonumber
			\|f\|_{\seqsp(\bbR^d)} \lesssim \|f-f_n\|_{\seqsp(\bbR^d)}+\|f_n\|_{\Hmix^2(\bbR^d)},
		\end{equation}
		where the right-hand side converges to $\|f\|_{\Hmix^2(\bbR^d)}$ as $n$ goes to infinity. This finally shows 
		\begin{equation}\nonumber
			\|f\|_{\seqsp(\bbR^d)} \lesssim \|f\|_{\Hmix^2(\bbR^d)}
		\end{equation}
		for any $f \in \Hmix^2(\bbR^d)$.
	\end{proof}
	
	\begin{rem} With an analogous proof but more involved techniques from Fourier analysis we may also prove for $1<p<\infty$
		$$
		\sup\limits_{\bj \in \bbN_{-1}^d} 2^{ |\bj|_1(2-1/p)} \left(\sum_{\bk \in \bbZ^d} |d_{\bj, \bk}(f)|^p\right)^{1/p} \lesssim \|f\|_{S^2_pW(\bbR^d)},
		$$
		where $S^2_pW(\bbR)$ denotes the Sobolev space with dominating mixed smoothness of order two and integrability $p$, see \cite[Chapter 2]{ST1987} and \cite{Tr10}.
		
	\end{rem}

	\section{The tent transform} \label{sec:tent}
	
	The \emph{tent transform} of a function $f: [0, 1]^d \rightarrow \bbR$ is defined as
	\begin{align*}
		\cR f(y_1, \dots, y_d) = f(|2y_1-1|, \dots, |2y_d-1|).
	\end{align*}
	We can think of $\cR f$ as being defined over $\bbT^d$. Assume that $f: [0, 1]^d \rightarrow \bbR$ is continuous. Notice that sampling a transformed function $\cR f(y_1, \dots, y_d)$ is equivalent to sampling $f(|2y_1-1|, \dots, |2y_d-1|)$. In particular we have
	\begin{align} \label{eq:wce_transfer}
		\frac1N \sum_{\bx \in X} f(\bx) - \int_{[0, 1]^d} f(\bx) \, \diffd\bx = \frac1N \sum_{\by \in Y} \cR f(\by) - \int_{\bbT^d} \cR f(\by) \, \diffd\by,
	\end{align}
	where, for a given set $Y \sbse \bbT^d$, we construct the pullback under the tent transform (which might be a multiset)
	$$
	X = \cR Y \coloneqq \{(|2y_1-1|, \dots, |2y_d-1|): (y_1, \dots, y_d) \in Y\}.
	$$
	Throughout the paper we will denote points set in $\bbT^d$ by $Y$ and point sets in $[0, 1]^d$ by $X$.
	
	We now study the tent transform $\cR$ as an operator between function spaces. To this end we first present the following result.
	
	\begin{thm} \label{thm:tent}
		The tent transform embeds continuously
		$$
		\cR: \Hmix^2([0, 1]^d) \rightarrow \seqsp(\bbT^d).
		$$
	\end{thm}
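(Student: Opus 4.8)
The plan is to factor the operator as
\[
\Hmix^2([0,1]^d) \xrightarrow{\;\text{Theorem \ref{thm:FS_charact.}}\;} \seqsp([0,1]^d) \xrightarrow{\;\cR\;} \seqsp(\bbT^d),
\]
so that it suffices to prove the purely sequence-space bound $\|\cR f\|_{\seqsp(\bbT^d)} \lesssim \|f\|_{\seqsp([0,1]^d)}$; composing with Theorem \ref{thm:FS_charact.} then gives the claim. Write $T(y)=|2y-1|$ for the univariate tent map, so that $\cR f = f\circ(T\otimes\cdots\otimes T)$. Since $f\in\seqsp([0,1]^d)$, its Faber expansion converges uniformly by \eqref{eq:continuous_embedding}, and $\cR f$ is continuous on $\bbT^d$; moreover each periodic Faber coefficient functional $d^{\text{per}}_{\bj',\bk'}$ is a fixed finite combination of point evaluations, hence bounded on $C_b(\bbT^d)$. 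I may therefore apply it termwise and obtain
\[
d^{\text{per}}_{\bj',\bk'}(\cR f) = \sum_{\bj\in\bbN_{-1}^d}\sum_{\bk\in\bbD_{\bj}} d_{\bj,\bk}(f)\, d^{\text{per}}_{\bj',\bk'}\!\left(\textstyle\prod_{i}(v_{j_i,k_i}\circ T)\right),
\]
which reduces everything to understanding $v_{j,k}\circ T$ in one variable.

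Second, I would establish the univariate identities by direct computation. For $j\ge 0$ and $k\in\bbD_{j}$ the tent map folds the hat $v_{j,k}$ into two hats one level finer,
\[
v_{j,k}\circ T = v^{\text{per}}_{j+1,\,k+2^j} + v^{\text{per}}_{j+1,\,2^j-1-k},
\]
while at the coarse level one checks $v_{-1,0}\circ T = v^{\text{per}}_{0,0}$ and $v_{-1,1}\circ T = v^{\text{per}}_{-1,0} - v^{\text{per}}_{0,0}$. The essential structural consequence is that the \emph{target} level in a coordinate determines the \emph{source} level uniquely: a target level $j_i'\ge 1$ can only come from source level $j_i = j_i'-1$, while target levels $j_i'\in\{-1,0\}$ can only come from source level $j_i=-1$. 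In the first case the index map $k_i'\mapsto k_i$ is two-to-one onto $\bbD_{j_i}$ with coefficient $+1$; in the boundary cases $d^{\text{per}}_{\bj',\bk'}(\cR f)$ is, in that coordinate, either the single value at $k_i=1$ (for $j_i'=-1$) or the signed difference of the values at $k_i\in\{0,1\}$ (for $j_i'=0$).

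Third comes the weight bookkeeping. Fix a target $\bj'$ and split the coordinates into $A=\{i:j_i'\ge1\}$, $B=\{i:j_i'=0\}$, $C=\{i:j_i'=-1\}$; the associated source is $\bj$ with $j_i=j_i'-1$ on $A$ and $j_i=-1$ on $B\cup C$. Using the description above, expanding the signed sum of length $2^{|B|}$ and applying Cauchy--Schwarz in the $B$-coordinates, then summing over $\bk'$ and using the two-to-one multiplicity in the $A$-coordinates, yields
\[
\sum_{\bk'\in\bbD^{\text{per}}_{\bj'}} \bigl|d^{\text{per}}_{\bj',\bk'}(\cR f)\bigr|^2 \le 2^{|A|+|B|}\sum_{\bk\in\bbD_{\bj}}\bigl|d_{\bj,\bk}(f)\bigr|^2 .
\]
Since $|\bj'|_1-|\bj|_1 = |A|-|B|$, multiplying by $2^{3|\bj'|_1/2}$ produces the prefactor $2^{3(|A|-|B|)/2}\cdot2^{(|A|+|B|)/2}=2^{2|A|-|B|}\le 2^{2d}$ in front of $2^{3|\bj|_1/2}(\sum_{\bk}|d_{\bj,\bk}(f)|^2)^{1/2}\le\|f\|_{\seqsp([0,1]^d)}$. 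Taking the supremum over $\bj'$ gives $\|\cR f\|_{\seqsp(\bbT^d)}\le 2^{2d}\|f\|_{\seqsp([0,1]^d)}$, and Theorem \ref{thm:FS_charact.} finishes the proof.

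I expect the main obstacle to be not the norm estimate itself --- which is clean geometric-series and Cauchy--Schwarz bookkeeping once the level shift is isolated --- but the careful treatment of the boundary levels $j_i\in\{-1,0\}$: getting the index correspondence and especially the sign in $v_{-1,1}\circ T = v^{\text{per}}_{-1,0}-v^{\text{per}}_{0,0}$ right, and making sure the source index sets $\bbD_{\bj}$ for $[0,1]^d$ (rather than the torus sets) are the ones appearing on the right, so that the bound is genuinely controlled by $\|f\|_{\seqsp([0,1]^d)}$.
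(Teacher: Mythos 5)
Your proposal is correct and takes essentially the same route as the paper's own proof: factor through $\seqsp([0,1]^d)$ via Theorem \ref{thm:FS_charact.}, establish the univariate folding identities $\cR v_{-1,0}=v_{0,0}^{\text{per}}$, $\cR v_{-1,1}=v_{-1,0}^{\text{per}}-v_{0,0}^{\text{per}}$, $\cR v_{j,k}=v_{j+1,2^j+k}^{\text{per}}+v_{j+1,2^j-k-1}^{\text{per}}$, and then do the level-shift bookkeeping with the same coordinate decomposition (your $A,B,C$ are the paper's $M_+,M_0,M_-$). Your only additions are cosmetic: an explicit constant $2^{2d}$ and an explicit justification for applying the periodic coefficient functionals termwise, which the paper subsumes under ``a comparison of coefficients.''
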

	
	Combining this embedding with \eqref{eq:wce_transfer}, we see that the worst case error of the quasi-Monte Carlo rule over $\seqsp(\bbT^d)$ translates to upper bounds for the worst case error over $\Hmix^2([0, 1]^d)$ under the tent transformation.
	
	\begin{proof}[Proof of Theorem \ref{thm:tent}]
		Note that a tent transformed element of the nonperiodic Faber basis $v_{\bj, \bk} \in \Fab({[0, 1]^d})$ can be expressed as a finite, linear combination of elements of the periodic Faber basis $v_{\bj, \bk}^{\text{per}} \in \Fab({\bbT^d})$ (one level higher). To be precise, in the one dimensional case we have for $j \ge 0, k \in \bbD_j$
		\begin{align*}
			\cR v_{-1, 0} & = v_{0, 0}^{\text{per}}, \\
			\cR v_{-1, 1} & = v_{-1, 0}^{\text{per}} - v_{0, 0}^{\text{per}}, \\
			\cR v_{j, k} & = v_{j+1, 2^j + k}^{\text{per}} + v_{j+1, 2^j-k-1}^{\text{per}}.
		\end{align*}
		A comparison of coefficients in the Faber representation shows that the coefficients transform under $\cR$ by (with $j \ge 1$)
		\begin{align}\label{tentdiff}
			\begin{split}
				d_{-1,0}(\cR f)  &=d_{-1,1}(f),\\
				d_{0,0} (\cR f) &= d_{-1,0}(f)-d_{-1,1}(f),\\
				d_{j,k} (\cR f) &= 
				\begin{cases}
					d_{j-1,2^{j-1}-k-1}(f) &, 0 \le k < 2^{j-1}  \\
					d_{j-1,k-2^{j-1}}(f) &, 2^{j-1} \le k < 2^j.
				\end{cases}
			\end{split}
		\end{align}
		We see that for each $j\ge 1$ and $k\in \{2^{j-1}, 2^{j-1}+1, \dots, 2^j-1\}$ there is exactly one $\tilde k\in \bbD_{j-1}$ such that
		\begin{align}\label{sumtentdiff}
			d_{j,k}(\cR f)= d_{j,\tilde k}(\cR f).
		\end{align}
		In the multivariate case, we use the product structure of the mixed differences \eqref{f100} and use the results for univariate functions. Fix $\bj \in \bbN_{-1}^d$. Define $\phi(\bj) \coloneqq ((j_i)_+ - 1)_{i=1}^d$ and decompose $[d]=M_-\cup M_0 \cup M_+$ into disjoint (possibly empty) sets
		\begin{align*}
			M_{-}\coloneqq\{i:j_i=-1\}, \quad M_0\coloneqq\{ i:j_i=0 \}, \quad M_+\coloneqq\{i:j_i\ge1\}.
		\end{align*}
		By sorting the product according to the above decomposition and applying \eqref{sumtentdiff} respectively \eqref{tentdiff} we get
		\begin{align*}
			\sum_{\bk\in \bbD_\bj^{\text{per}}} &|d_{\bj,\bk}(\cR f)|^2 
			= \sum_{\bk\in \bbD_\bj^{\text{per}}} \left|  \prod_{i \in M_-}d^{(i)}_{-1,0}  \prod_{i \in M_0}d^{(i)}_{0,0}  \prod_{i \in M_+}d^{(i)}_{j_i,k_i} (\cR f)  \right|^2\\
			&\lesssim \sum_{\bk\in \bbD_{\phi(\bj)}^{\text{per}}}\left| \prod_{i \in M_-}d^{(i)}_{-1,1}  \prod_{i \in M_0}(d^{(i)}_{-1,0}-d^{(i)}_{-1,1}) \prod_{i \in M_+}d^{(i)}_{j_i-1,k_i}  (f)  \right|^2 \\
			&\lesssim \sum_{\bk\in \bbD_{\phi(\bj)}^{\text{per}}}\left| \prod_{i \in M_-}d^{(i)}_{-1,1}  \sum_{A\subseteq M_0}(-1)^{\# A} \prod_{i \in M_0}d^{(i)}_{-1,\chi^{}_A(i)}   \prod_{i \in M_+}d^{(i)}_{j_i-1,k_i} (f)  \right|^2 \\       
			&\lesssim \sum_{\bk \in \bbD_{\phi(\bj)}}  \left| d_{\phi(\bj),\bk}(f) \right|^2,
		\end{align*}
		and obtain the desired
		$$
		\sup_{\bj \in \bbN_{-1}^d} 2^{3|\bj|_1/2}\left( \sum_{\bk \in \bbD_{\bj}^{\text{per}}} |d_{\bj,\bk}(\cR f)|^2 \right)^{1/2} 
		\lesssim \sup_{\bj \in \bbN_{-1}^d} 2^{3|\bj|_1/2}\left( \sum_{\bk \in \bbD_{\bj}} |d_{\bj,\bk}(f)|^2 \right)^{1/2}.
		$$
		Thus $\cR$ defines a continuous operator between $\Hmix^2([0, 1]^d)$ and $\seqsp(\bbT^d)$.
	\end{proof}
	
	It is natural to ask for similar mapping properties of $\cR$ between further spaces. Such questions have been studied in the recent preprint \cite{SU2025}. Corollary 4.4 there is in a similar spirit to what we looked at here but does not quite reach smoothness $s=2$.
	
	\section{Tent transformed order 2 nets} \label{sec:nets}
	
	\subsection{General definitions of higher order digital nets}
	
	Digital nets go back to the construction of Halton \cite{Hal1964} via digit reversals. It was only realized later how this construction should be generalized in a usable way, for historical remarks see \cite{DP2010}. On the one hand, this class of point sets should be large enough to capture the important properties a  point set needs to posses for good approximation properties. On the other hand, the class should still be sufficiently structured to get a grasp of the involved analysis for the error of the corresponding quasi-Monte Carlo rule. We aim to give an accessible introduction to these point sets, further information and generalizations can be found in \cite{BD2009, Dic2008, DP2010}.
	
	Generally speaking, a digital $(t, n, d)$-net in base $b$ is a subset $Y \sbse \bbT^d$ of size $\#Y = b^n$ such that for each elementary, $d$-dimensional, $b$-adic interval
	$$
	J = \prod_{i=1}^d \left[\frac{a_i}{b^{\ell_i}}, \frac{a_i+1}{b^{\ell_i}}\right) \quad (a_i \in \{0, 1, \dots, b^{\ell_i}-1\})
	$$
	of volume $|J| = b^{-\ell_1 - \dots - \ell_d} \stackrel{!}{=} b^{t-n}$ it holds $\#(Y \cap J) = b^t$. That is every elementary interval up to a given volume contains as many points as would be expected from a ``uniformly distributed'' set. The parameter $t$ can be seen as measuring the quality of the point set, where we would like to have $t$ small. It is common to only consider prime powers $b = p^m$ as bases. In what follows we will only be interested in the case $b = 2$ (which in general also yields lower implicit constants in the asymptotic bounds).
	
	Analogous to how lattices can be generated from a generating vector $\bv \in [0, 1)^d$ by pushing it through the torus, digital nets can be constructed from generator matrices
	$$
	C_i = \left(\begin{matrix}
		- & c_{i, 1}^\top & - \\
		& \vdots & \\
		- & c_{i, n}^\top & -
	\end{matrix}\right) \in \{0, 1\}^{n \times n}
	$$
	for $i=1, \dots, d$, where $\{0, 1\} \cong \bbZ/2\bbZ$ will be interpreted as the field with $2$ elements. The points $\by^k$ in the set $Y = Y(C_1, \dots, C_n) = \{\by^0, \by^1, \dots, \by^{N-1}\} \sbse [0, 1)^d$ with $N = 2^n$ are then constructed as follows: collect the binary digits $k = \kappa_0 + 2 \kappa_1 + \dots + 2^{n-1} \kappa_{n-1}$ with $\kappa_j \in \{0, 1\}$ in a vector
	$$
	\kappa = \left(\begin{matrix}
		\kappa_0 \\
		\kappa_1 \\
		\vdots \\
		\kappa_{n-1}
	\end{matrix}\right)
	$$
	and determine, using arithmetic in $\bbZ/2\bbZ$, the product
	$$
	C_i \kappa = \left(\begin{matrix}
		c_{i, 1}^\top \kappa \\
		\vdots \\
		c_{i, n}^\top \kappa
	\end{matrix}\right) = \left(\begin{matrix}
		\kappa^{(i)}_1 \\
		\vdots \\
		\kappa^{(i)}_n
	\end{matrix}\right).
	$$
	The $i$-th component of $\by^k$ is then given by
	$$
	y_i^k = \frac{\kappa_1^{(i)}}{2^1} + \frac{\kappa_2^{(i)}}{2^2} + \dots + \frac{\kappa_n^{(i)}}{2^n}.
	$$
	Given a set of generator matrices $C_i \in \{0, 1\}^{n \times n}, i=1, \dots, d$ the parameters $n$ and $d$ can be seen immediately. The quality parameter $t$ of $Y$ translates into the following property for the generator matrices \cite{DP2010}: for any $0 \leq \ell_1, \dots, \ell_d \leq n$ with $\ell_1 + \dots + \ell_d = n-t$ the initial rows
	\begin{align} \label{linear_independence_parameter}
		c^{}_{1, 1}, \dots, c^{}_{1, \ell_1}, \dots, c^{}_{d, 1}, \dots, c^{}_{d, \ell_d}
	\end{align}
	of the generator matrices are linearly independent. Clearly, the smaller the value of $t$ is, the stronger this property gets. Indeed, the smallest possible $t$ for which \eqref{linear_independence_parameter} holds is also the smallest $t$ for which $Y$ is a $(t, n, d)$-net.
	
	To improve the quality of such nets even further one introduces the order parameter $\alpha$ \cite{BD2009, Dic2008, HMOU2016}. We will formulate it in terms as we will need it later on (also see \cite{GSY16, GSY2016}).
	
	\begin{defi}
		Let $t, n, d, \alpha \in \bbN$ and let $C_1, \dots, C_d \in \{0, 1\}^{\alpha n \times n}$ be matrices with rows $c_{i, j}^\top, i=1, \dots, d, j=1, \dots, \alpha n$ respectively. Let $Y \sbse \bbT^d, \#Y = 2^n$ be the digital net generated by the matrices $C_1, \dots, C_d$ as above (with the obvious modification for non-square matrices).
		
		The point set $Y$ is called an \emph{order $\alpha$ digital $(t, n, d)$-net} if for all indices $1 \leq j_{i, \nu_i} < \dots < j_{i, 1} \leq \alpha n$, where $0 \leq \nu_i \leq n$, with
		$$
		\sum_{i=1}^d \sum_{\ell = 1}^{\min\{\alpha, \nu_i\}} j_{i, \ell} \leq \alpha n - t
		$$
		the row vectors
		$$
		c_{1, j_{1, \nu_1}}^\top, \dots, c_{1, j_{1, 1}}^\top, \dots, c_{d, j_{d, \nu_d}}^\top, \dots, c_{d, j_{d, 1}}^\top \in \{0, 1\}^n
		$$
		are linearly independent over $\{0, 1\} \cong \bbZ/2\bbZ$.
	\end{defi}
	
	Note how the indices $j_{i, 1} > \dots > j_{i, \nu_i}$ are ordered descendingly, so that we only constrain how far down the (at most) $\alpha$ last rows can be that we pick out of every matrix $C_i$. If $\nu_i \geq \alpha$ we thus may assume that also $j_{i, \alpha+1} = j_{i, \alpha}-1$, $j_{i, \alpha+2}=j_{i, \alpha}-2$, $\dots$, $j_{i, \nu_i} = 1$. If $\alpha = 1$ we recover the definition of a digital $(t, n, d)$-net.
	
	Again, the parameter $t$ controls the quality of the point set where we would like to have $t$ small. The parameter $\alpha$ is connected to the smoothness of the underlying functions that we want to integrate via the quasi-Monte Carlo rule \cite{Dic2008, GSY2016}. Also, $\alpha$ can be interpreted as a \emph{complexity parameter} since every component of a point of an order $\alpha$ digital $(t, n, d)$-net requires $\alpha \log_2 N$ bits to write down in binary.
	
	
	
	\subsection{Constructions} \label{subsec:construction}
	
	Higher order nets can be constructed from standard digital nets via the following \cite{BD2009, Dic2008, HMOU2016}. Let $\tilde Y = \{\tilde\by^0, \tilde\by^1, \dots, \tilde\by^{N-1}\} \in \bbT^{\alpha d}$, $N = 2^{n}$ be a $(\tilde t, n, \alpha d)$-net (generated from square matrices $C_i \in \{0, 1\}^{n \times n}, i=1, \dots, \alpha d$). Every component of $\tilde \by^k$ can be written in base $2$ using only $n$ digits after the comma
	$$
	\tilde y^k_i = \sum_{\ell = 1}^{n} \frac{\eta^k_{i, \ell}}{2^\ell} \quad (\eta^k_{i, \ell} \in \{0, 1\}).
	$$
	Define now $Y = \{\by^0, \by^1, \dots, \by^{N-1}\} \sbse \bbT^d$ with $N = 2^n$ as above via
	$$
	y_i^k = \sum_{\ell = 1}^{n} \sum_{s = 1}^\alpha \frac{\eta^{k}_{(i-1)\alpha + s, \ell}}{2^{(\ell-1)\alpha+s}}
	$$
	for $i=1, \dots, d$ and $k=0, 1, \dots, N-1$. This operation interlaces the binary digits of $\tilde y^k_{(i-1)\alpha + 1}, \dots, \tilde y^k_{i\alpha}$ giving a number with $\alpha n$ digits after the comma. The resulting set $Y$ is then an order $\alpha$ digital $(t, n, d)$-net where $t$ can be chosen as \cite{BD2009, Dic2008, HMOU2016}
	$$
	t = \alpha \tilde t + \frac{d \alpha (\alpha-1)}2.
	$$
	Starting with a digital net we can thus construct digital nets of arbitrarily high order. Although note that this also raises the quality parameter $t$.
	
	It is a nontrivial task to construct digital nets with a good quality parameter, especially as the dimension $d$ increases. There are multiple constructions available by now. Digital sequences $(\by^0, \by^1, \dots)$ are, roughly speaking, sequences of points such that any initial block of $2^n$ elements constitutes a digital net, where the corresponding quality parameter $t$ could depend on $n$, ideally in a controllable manor. For precise definitions consult \cite{DP2010}. Examples of such sequences include the Sobol'-construction \cite{Sob1967} and the Niederreiter-construction \cite{Nie1988}. The ones that we will use and that generally yield the lowest $t$ values are the Niederreiter--Xing sequences \cite{NX1996, XN1995}. These sequences use sophisticated, algebraic tools to construct the generator matrices. Fortunately, computer implementations are available \cite{Pir2002}. We will use the database \cite{KN2016} available under
	\begin{center}
		\textit{people.cs.kuleuven.be/$\sim$dirk.nuyens/qmc-generators/}
	\end{center}
	which contains implementations for these sequences in Matlab/Octave, C++ and Python2/3. Given a digital sequence $\{\by^0, \by^1, \dots \}$ in dimension $\alpha d-1$ from the Niederreiter-Xing construction we can construct a $(t, n, \alpha d)$-net via
	$$
	\{(\lfloor y^k_1\rfloor_n, \dots, \lfloor y^k_{\alpha d-1}\rfloor_n, k/2^n): k=0, 1, \dots, 2^n-1\},
	$$
	where $\lfloor y \rfloor_n$ truncates all but the first $n$ binary digits of $y$ after the comma. Using the digital interlacing from above we can turn this into an order $\alpha$ digital $(t, n, d)$-net (with a possibly different $t$ value), in particular for $\alpha = 2$.
	
	\subsection{The worst case error for tent transformed order 2 nets}
	
	We now come to our central result on tent transformed order $2$ nets, giving a bound for the decay for the quasi-Monte Carlo rule. 
	
	\begin{thm} \label{thm:upper_bound}
		Let $Y \sbse \bbT^d$ be an order $2$ digital $(t, n, d)$-net of cardinality $\#Y = N = 2^n$. Let $X = \cR Y \sbse [0, 1]^d$ be the tent transformed point set with $\#X = N$. Then
		$$
		\QMC(X, \Hmix^2([0, 1]^d)) \lesssim 2^t N^{-2}(\log N)^{2d-1},
		$$
		where the implicit constant depends only on $d$.
	\end{thm}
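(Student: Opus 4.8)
The plan is to transfer the entire problem to the torus via the tent transform, reduce it to a quadrature estimate for the tensor Faber splines, and then exploit that order $2$ nets integrate these splines with quadratic accuracy.

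\textbf{Step 1 (Periodization).} First I would invoke the error identity \eqref{eq:wce_transfer}, which shows that for every $f$ the quadrature error of $X=\cR Y$ on $[0,1]^d$ equals the quadrature error of $g\coloneqq\cR f$ on $\bbT^d$ for the untransformed net $Y$. Combined with the continuous embedding $\cR:\Hmix^2([0,1]^d)\to\seqsp(\bbT^d)$ of Theorem \ref{thm:tent}, giving $\|\cR f\|_{\seqsp(\bbT^d)}\lesssim\|f\|_{\Hmix^2([0,1]^d)}$, the supremum defining $\QMC(X,\Hmix^2([0,1]^d))$ is dominated by the worst case QMC error of $Y$ over the unit ball of $\seqsp(\bbT^d)$. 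Thus it suffices to prove, for the functional $E(h)\coloneqq\frac1N\sum_{\by\in Y}h(\by)-\int_{\bbT^d}h(\by)\,\diffd\by$,
$$
\sup_{\|g\|_{\seqsp(\bbT^d)}\le 1}\Big|E(g)\Big|\lesssim 2^t N^{-2}(\log N)^{2d-1}.
$$

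\textbf{Step 2 (Faber expansion and Cauchy--Schwarz).} Next I expand $g=\sum_{\bj,\bk}d_{\bj,\bk}(g)\,v_{\bj,\bk}^{\text{per}}$; the series converges uniformly by \eqref{eq:continuous_embedding}, so $E$ applies term by term and $E(g)=\sum_{\bj}\sum_{\bk\in\bbD_\bj^{\text{per}}}d_{\bj,\bk}(g)\,E(v_{\bj,\bk}^{\text{per}})$. Applying Cauchy--Schwarz in $\bk$ at each fixed level $\bj$ and using $\big(\sum_\bk|d_{\bj,\bk}(g)|^2\big)^{1/2}\le 2^{-3|\bj|_1/2}\|g\|_{\seqsp(\bbT^d)}$ yields
$$
|E(g)|\le \sum_{\bj\in\bbN_{-1}^d}2^{-3|\bj|_1/2}\,S_\bj,\qquad S_\bj\coloneqq\Big(\sum_{\bk\in\bbD_\bj^{\text{per}}}|E(v_{\bj,\bk}^{\text{per}})|^2\Big)^{1/2}.
$$
It is exactly this use of the Hilbert ($\ell_2$-in-$\bk$) structure of $\seqsp$, rather than an $\ell_1$ triangle inequality, that I expect to save one logarithmic factor against \eqref{ineq:Dick}.

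\textbf{Step 3 (The key estimate --- main obstacle).} The heart of the argument is a bound on $S_\bj$ converting the order $2$ property into quadratic decay. I would expand each $E(v_{\bj,\bk}^{\text{per}})$ by Walsh series: the character relation $\frac1N\sum_{\by\in Y}w_\bm(\by)=1$ for $\bm$ in the dual net $Y^\perp$ and $=0$ otherwise gives $E(v_{\bj,\bk}^{\text{per}})=\sum_{\mathbf 0\ne\bm\in Y^\perp}\widehat{v_{\bj,\bk}^{\text{per}}}(\bm)$. Since the Faber splines are tensor products of piecewise linear hat functions, their Walsh coefficients decay like $2^{-2a}$ in the digit position $a$; coupled with the order $2$ condition, which forces every nonzero $\bm\in Y^\perp$ to have order $2$ weight exceeding $n-t$, this produces a factor $\asymp 2^{t}2^{-2n}$. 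Aggregating the squared coefficients over $\bk$ by Parseval and over the (polynomially many) relevant dual vectors, I expect $S_\bj\lesssim 2^{t}2^{-2n}2^{3|\bj|_1/2}P(\bj)$ for $|\bj|_1$ below the net resolution, with $P$ a weight of degree $d-1$ from the dual-net counting, alongside the trivial bound $S_\bj\lesssim 2^{-|\bj|_1/2}$ valid for all $\bj$. Matching the coefficient decay to the dual-net structure quantitatively is the genuine difficulty, and is precisely the phenomenon ``order $2$ nets work well on tensor Faber splines'' highlighted in the abstract.

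\textbf{Step 4 (Summation over levels).} Finally I would split $\sum_\bj 2^{-3|\bj|_1/2}S_\bj$ at the threshold $|\bj|_1\approx n$ where the two bounds of Step 3 cross. On the fine part $|\bj|_1\gtrsim n$ the trivial bound gives $2^{-3|\bj|_1/2}S_\bj\lesssim 2^{-2|\bj|_1}$, a geometric tail of order $N^{-4}$ up to logs, hence negligible. On the coarse part the key estimate gives $2^{-3|\bj|_1/2}S_\bj\lesssim 2^t2^{-2n}P(\bj)$; since there are $\asymp J^{d-1}$ multi-indices with $|\bj|_1=J$ and $P$ has degree $d-1$, summing over $J\le n$ produces $\sum_{J\le n}J^{2(d-1)}\asymp n^{2d-1}$, giving the claimed $2^tN^{-2}(\log N)^{2d-1}$. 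The remaining work is to make this counting rigorous and to verify that the $\ell_2$-aggregation of Step 2 indeed removes exactly one power of $\log N$ relative to \eqref{ineq:Dick}.
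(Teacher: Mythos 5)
Your Steps 1 and 2 coincide exactly with the paper's proof: periodization via \eqref{eq:wce_transfer} together with Theorem \ref{thm:tent}, then the periodic Faber expansion and level-wise Cauchy--Schwarz, reducing the claim to bounding $\sum_{\bj \in \bbN_{-1}^d} 2^{-3|\bj|_1/2} S_\bj$ with $S_\bj = \bigl(\sum_{\bk \in \bbD_\bj^{\text{per}}} |c_{\bj,\bk}(Y)|^2\bigr)^{1/2}$; this $\ell_2$-in-$\bk$ aggregation is indeed where the paper gains over \eqref{ineq:Dick}. The divergence is at your Step 3, which you correctly identify as the crux but do not prove. The paper does not prove it from scratch either: it imports precisely these estimates from \cite{HMOU2016} (displays (5.9)--(5.12) there, which rest on Markhasin's bounds \cite{Mar2015} for order $2$ nets on Faber/hat functions). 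The coarse-level bound you conjecture, $S_\bj \lesssim 2^t 2^{-2n} 2^{3|\bj|_1/2} P(\bj)$ with $P$ of degree $d-1$, is exactly what those lemmas deliver (compare \eqref{ineq:estimate_less_M}), and your Step 4 counting $\sum_{\ell \le n} \ell^{d-1}(n-\ell)^{d-1} \asymp n^{2d-1}$ reproduces the paper's computation. So the program is the right one, but as written the proposal is incomplete at its heart: to finish, you would either cite those lemmas or carry out the Walsh-space analysis in full, which is the main technical content of the cited works, not a routine verification.

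Three further points need repair. First, for an order $2$ digital $(t,n,d)$-net the dual-net condition reads $\mu_2(\bm) > 2n - t$ for $\mathbf{0} \neq \bm \in Y^\perp$, not ``exceeding $n-t$'' as you wrote; your conclusion $\asymp 2^t 2^{-2n}$ is consistent only with the correct threshold. Second, the bound $S_\bj \lesssim 2^{-|\bj|_1/2}$ that you call trivial is not trivial and is false for fine levels without using the net structure: the sampling part of $c_{\bj,\bk}(Y)$ must be controlled by the box-counting property (at most $2^t$ points of $Y$ in any dyadic box of volume less than $2^{t-n}$), which gives $S_\bj \lesssim 2^{(t-n)/2} + 2^{-|\bj|_1/2}$; with this corrected bound the fine part still sums to $\lesssim N^{-2}(\log N)^{d-1}$ times a power of $2^t$ --- note also that the tail is of size $N^{-2}(\log N)^{d-1}$, not $N^{-4}$, since $2^{-2n} = N^{-2}$. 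Third, your splitting ignores the levels with some $j_i = -1$: since $v_{-1,0}^{\text{per}} \equiv 1$, these correspond to lower-dimensional hat functions evaluated on coordinate projections of $Y$, and they require a separate (routine, since projections of digital nets are digital nets) treatment, which the paper carries out via (5.12) of \cite{HMOU2016}, contributing $N^{-2}(\log N)^{d-2}$.
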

	
	\begin{proof}
		We will follow a similar strategy as in \cite{HMOU2016} with some adaptations for the case of smoothness $2$, using Proposition \ref{prop:Faber}.
		
		Let $f \in \Hmix^2([0, 1]^d)$, we need to upper bound the quantity
		$$
		\left|\frac1N \sum_{\bx \in X} f(\bx) - \int_{[0, 1]^d} f(\bx) \, \diffd \bx\right| = \left|\frac1N \sum_{\by \in Y} \cR f(\by) - \int_{\bbT^d} \cR f(\by) \, \diffd \by\right|.
		$$
		Let $g = \cR f \in \seqsp(\bbT^d)$. Writing
		$$
		g = \sum_{\bj \in \bbN_{-1}^d} \sum_{\bk \in \bbD_\bj^{\text{per}}} d_{\bj, \bk}(g) v_{\bj, \bk}^\text{per}
		$$
		we get
		\begin{align*}
			& \frac1N \sum_{\by \in Y} g(\by) - \int_{\bbT^d} g(\by) \, \diffd\by \\
			= & \sum_{\bj \in \bbN_{-1}^d} \sum_{\bk \in \bbD_\bj^{\text{per}}} d_{\bj, \bk}(g) \left(\frac1N \sum_{\by \in Y} v_{\bj, \bk}^\text{per}(\by) - \int_{\bbT^d} v_{\bj, \bk}^\text{per}(\by) \, \diffd \by\right).
		\end{align*}
		Setting
		$$
		c_{\bj, \bk}(Y) \coloneqq \frac1N \sum_{\by \in Y} v_{\bj, \bk}^\text{per}(\by) - \int_{\bbT^d} v_{\bj, \bk}^\text{per}(\by) \, \diffd \by,
		$$
		we obtain
		\begin{align*}
			& \left|\frac1N \sum_{\bx \in X} f(\bx) - \int_{[0, 1]^d} f(\bx) \, \diffd \bx\right| \leq \sum_{\bj \in \bbN_{-1}^d} \sum_{\bk \in \bbD_\bj^{\text{per}}} |d_{\bj, \bk}(g) c_{\bj, \bk}(Y)| \\
			\leq & \sum_{\bj \in \bbN_{-1}^d} \left(\sum_{\bk \in \bbD_\bj^{\text{per}}} |d_{\bj, \bk}(g)|^2\right)^{1/2} \left(\sum_{\bk \in \bbD_\bj^{\text{per}}} |c_{\bj, \bk}(Y)|^2\right)^{1/2} \\
			\leq & \|g\|_{\seqsp(\bbT^d)} \sum_{\bj \in \bbN_{-1}^d} 2^{-3|\bj|_1/2} \left(\sum_{\bk \in \bbD_\bj^{\text{per}}} |c_{\bj, \bk}(Y)|^2\right)^{1/2}.
		\end{align*}
		By Theorem \ref{thm:tent} we have $\|g\|_{\seqsp(\bbT^d)} \lesssim \|f\|_{\Hmix^2([0, 1]^d)}$, so that it remains to show
		$$
		\sum_{\bj \in \bbN_{-1}^d} 2^{-3|\bj|_1/2} \left(\sum_{\bk \in \bbD_\bj^{\text{per}}} |c_{\bj, \bk}(Y)|^2\right)^{1/2} \lesssim \frac{(\log N)^{2d-1}}{N^2}.
		$$
		Decompose the sum
		$$
		\sum_{\bj \in \bbN_{-1}^d} = \sum_{\substack{\bj \in \bbN_{0}^d \\ |\bj|_1 < M}} + \sum_{\substack{\bj \in \bbN_{0}^d \\ |\bj|_1 \geq M}} + \sum_{e \sbsn [d]} \sum_{\bj \in \bbN_{-1}^d(e)},
		$$
		where $M \coloneqq n - \lceil t/2 \rceil \leq n$ and $\bbN_{-1}^d(e) \coloneqq \{\bj \in \bbN_{-1}^d: \{i \in [d]: j_i \neq -1\} = e\}$. We will apply \cite[(5.9-12)]{HMOU2016} (in their notation $r = 2, p = p' = 2, q' = 1$), which uses a bound for $c_{\bj, \bk}(Y)$ from \cite{Mar2015}, see \cite[Proposition 4.3, Lemma 5.1, Lemma 5.2]{HMOU2016}, only valid because we work with order $2$ nets. To be specific, from (5.9) and the subsequent estimate we get
		\begin{align} \label{ineq:estimate_less_M}
			\begin{split}
				\sum_{\substack{\bj \in \bbN_{0}^d \\ |\bj|_1 < M}} 2^{-3|\bj|_1/2} \left(\sum_{\bk \in \bbD_\bj^{\text{per}}} |c_{\bj, \bk}(Y)|^2\right)^{1/2} & \lesssim 2^{-2n+t} \sum_{\ell = 0}^{M-1} \ell^{d-1} (M-\ell)^{d-1} \\
				& \lesssim 2^{-2n+t} M^{2d-1} \lesssim 2^t N^{-2} (\log N)^{2d-1}.
			\end{split}
		\end{align}
		From (5.10) and the subsequent estimate we get
		\begin{align} \label{ineq:estimate_geq_M}
			\begin{split}
				\sum_{\substack{\bj \in \bbN_{0}^d \\ |\bj|_1 \geq M}} 2^{-3|\bj|_1/2} \left(\sum_{\bk \in \bbD_\bj^{\text{per}}} |c_{\bj, \bk}(Y)|^2\right)^{1/2} & \lesssim 2^{-2n+t} n^{d-1} + 2^{-2n} n^{d-1} \\
				& \lesssim 2^t N^{-2} (\log N)^{d-1}.
			\end{split}
		\end{align}
		Finally, from (5.12) we get
		\begin{align*}
			\begin{split}
				\sum_{\bj \in \bbN_{-1}^d(e)} 2^{-3|\bj|_1/2} \left(\sum_{\bk \in \bbD_\bj^{\text{per}}} |c_{\bj, \bk}(Y)|^2\right)^{1/2} \lesssim 2^{-2n} n^{\# e - 1} \lesssim N^{-2} (\log N)^{\#e-1},
			\end{split}
		\end{align*}
		so that
		\begin{align} \label{ineq:estimate_e}
			\sum_{e \sbsn [d]} \sum_{\bj \in \bbN_{-1}^d(e)} 2^{-3|\bj|_1/2} \left(\sum_{\bk \in \bbD_\bj^{\text{per}}} |c_{\bj, \bk}(Y)|^2\right)^{1/2} \lesssim N^{-2} (\log N)^{d-2}.
		\end{align}
		Putting \eqref{ineq:estimate_less_M}, \eqref{ineq:estimate_geq_M} and \eqref{ineq:estimate_e} together yields the claim.
	\end{proof}
	
	\section{The tent transform and classical Korobov spaces} \label{sec:Korobov}
	
	In this section we want to give another mapping property of the tent transform for spaces relevant to quasi-Monte Carlo quadrature and only requiring Fourier information. We define the \emph{classical Korobov space} $\bE^s_d \coloneqq \bE^s(\bbT^d)$ with smoothness $s>0$ via the norm
	$$
	\|f\|_{\bE^s_d} \coloneqq \sup\limits_{\bk \in \bbZ^d}|\hat{f}(\bk)| 
	\prod\limits_{i=1}^d (1+|k_i|)^s,
	$$
	where
	$$
	\hat f(\bk) = \int_{[0, 1]^d} f(\bx) \exp(-2\pi\iu \bk^\top \bx) \,\diffd \bx
	$$
	denotes the Fourier coefficient over $\bbT^d$ (contrary to $\cF[\cdot]$ which denotes the Fourier transform over $\bbR$). One sometimes finds a different notion of ``Korobov space'' in the literature, see for example \cite{Dic2008} or \cite{DSWW2006} (which in our notation corresponds to $\Hmix^s(\bbT^d)$ or $\Hmix^s([0, 1]^d)$). Here we use the space as defined in \cite{Kor1959} and \cite[Section 3.3]{DTU18} and call it ``classical Korobov space'' to emphasize this distinction. In the sequel we will prove that the tent transform embeds from $\Hmix^2([0,1]^d)$ into $\bE^2_d$. As preparation for the general setting let us start with the case for $d=1$. 
	
	\begin{lem}\label{lem_RmapsH2mixtoE21} In the univariate case the tent transform $\mathcal{R}$ maps $\Hmix^2([0,1])$ continuously to $\bE^2_1$, that is
		$$
		\mathcal{R}:\Hmix^2([0,1]) \to \bE^2(\bbT). 
		$$    
	\end{lem}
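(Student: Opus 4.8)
The plan is to work directly with the Fourier coefficients, since the target norm $\|\cdot\|_{\bE^2_1}$ is defined through them. The first step is to compute $\hat{g}(k)$ for $g = \cR f$ explicitly. Splitting the defining integral at $y = 1/2$ and substituting $u = 1 - 2y$ on $[0, 1/2]$ and $u = 2y - 1$ on $[1/2, 1]$, the two pieces combine into a single cosine integral. Concretely, I anticipate the clean identity
\begin{equation}\nonumber
	\hat{g}(k) = \int_0^1 \cR f(y) \exp(-2\pi\iu k y) \, \diffd y = (-1)^k \int_0^1 f(u) \cos(\pi k u) \, \diffd u,
\end{equation}
which reflects the well-known fact that the tent transform turns ordinary Fourier coefficients into (half-period) cosine coefficients. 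This reduces the problem to showing $|c_k| \lesssim (1+|k|)^{-2}\|f\|_{\Hmix^2([0,1])}$ for the cosine coefficients $c_k \coloneqq \int_0^1 f(u)\cos(\pi k u)\,\diffd u$, since $|(-1)^k| = 1$.

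For $k = 0$ the bound is immediate: $|c_0| \le \|f\|_{L_2([0,1])} \le \|f\|_{\Hmix^2([0,1])}$ by Cauchy--Schwarz on the unit interval. For $k \neq 0$ the main tool is integrating by parts twice. In the first integration by parts the boundary terms vanish, because the antiderivative $\sin(\pi k u)/(\pi k)$ of $\cos(\pi k u)$ vanishes at both endpoints $u = 0$ and $u = 1$ for integer $k$; this yields $c_k = -(\pi k)^{-1}\int_0^1 f'(u)\sin(\pi k u)\,\diffd u$. The second integration by parts is the crucial one: here the boundary term does \emph{not} vanish and produces a contribution proportional to $(\pi k)^{-2}\big((-1)^k f'(1) - f'(0)\big)$, together with the integral $(\pi k)^{-2}\int_0^1 f''(u)\cos(\pi k u)\,\diffd u$.

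The final step is to estimate these three pieces. The integral term is bounded by $(\pi k)^{-2}\|f''\|_{L_2([0,1])} \lesssim k^{-2}\|f\|_{\Hmix^2([0,1])}$ via Cauchy--Schwarz. The boundary values $f'(0)$ and $f'(1)$ are controlled by the one-dimensional Sobolev embedding $H^2([0,1]) \hookrightarrow C^1([0,1])$, giving $|f'(0)|, |f'(1)| \le \|f'\|_\infty \lesssim \|f\|_{\Hmix^2([0,1])}$. Altogether $|c_k| \lesssim k^{-2}\|f\|_{\Hmix^2([0,1])}$ for $k \neq 0$, which combines with the $k = 0$ case to give $\sup_{k \in \bbZ}(1+|k|)^2 |\hat g(k)| \lesssim \|f\|_{\Hmix^2([0,1])}$, i.e.\ continuity of $\cR : \Hmix^2([0,1]) \to \bE^2(\bbT)$.

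I expect the main obstacle to be the non-vanishing boundary term in the second integration by parts. One might hope that the full quadratic decay comes from the integral alone, but the endpoint values $f'(0), f'(1)$ are genuinely present and cannot be integrated away; handling them correctly is exactly what forces the use of the trace/embedding $H^2([0,1]) \hookrightarrow C^1([0,1])$ rather than merely $\|f''\|_{L_2}$. Everything else is routine once the cosine identity for $\hat g(k)$ is established.
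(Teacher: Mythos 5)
Your proof is correct and takes essentially the same route as the paper: both reduce $(\cR f)^\wedge(k)$ to the half-period cosine coefficient $(-1)^k\int_0^1 f(t)\cos(\pi k t)\,\diffd t$ (you by splitting the integral at $y=1/2$, the paper by symmetrizing over $[-1,1]$ and discarding the odd sine part), then integrate by parts twice, with the vanishing first boundary term and the surviving endpoint values $f'(0), f'(1)$ plus the $\|f''\|_{L_2}$ integral term handled exactly as you describe. The only cosmetic difference is that you invoke the embedding $H^2([0,1])\hookrightarrow C^1([0,1])$ by name, whereas the paper absorbs $|f'(0)|+|f'(1)|$ into $\|f\|_{\Hmix^2([0,1])}$ implicitly.
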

	\begin{proof} 
		It is clear that
		$$
		(\cR f)^\wedge(0) = \int_{0}^1 f(|2x-1|) \,\diffd x = \int_{0}^1 f(x) \, \diffd x,
		$$
		so that $|(\cR f)^\wedge(0)| \lesssim \|f\|_{\Hmix^2([0, 1])}$. Similarly, for $k \in \bbZ\setminus \{0\}$ we have
		\begin{align*}
			\begin{split}
				{(\mathcal{R}f)}^\wedge(k) &= \int_0^1 f(|2x-1|)\exp(-2\pi \iu kx)\,\diffd x \\
				&=\frac{1}{2} \int_{-1}^1 f(|y|)\exp(-\pi \iu k(y+1))\,\diffd y \\
				&= \frac{\exp(-\pi\iu k)}{2}\int_{-1}^1 f(|y|)\exp(-\pi \iu ky)\,\diffd y\\
				&= (-1)^k\int_{0}^1 f(t)\cos(\pi k t)\,\diffd t - 
				\frac{(-1)^k}{2} \iu \int_{-1}^1 f(|y|)\sin(\pi k y)\,\diffd y.
			\end{split}
		\end{align*}
		The second integral disappears by symmetry, so that we continue with the first integral and observe via integration by parts
		\begin{align*}
			\begin{split}
				\int_{0}^1 f(t)\cos(\pi k t)\,\diffd t &= \left[\frac{f(y)\sin(k\pi y)}{k\pi}\right]_0^1-\frac{1}{\pi k }\int_{0}^1 f'(t)\sin(\pi k t)\, \diffd t \\
				&= -\frac{1}{\pi k }\int_{0}^1 f'(t)\sin(\pi k t)\,\diffd t.
			\end{split}
		\end{align*}
		We apply partial integration again and get
		\begin{align*}
			\int_{0}^1 f'(t)\sin(\pi k t)\,\diffd t = -\frac{1}{k\pi}\left[f'(t)\cos(\pi k t)\right]_{t=0}^1 + \frac{1}{\pi k}\int_{0}^1 f''(t)\cos(\pi k t)\,\diffd t.
		\end{align*}
		Hence, we get 
		\begin{align*}
			\begin{split}
				|{(\mathcal{R}f)}^\wedge(k)| &\leq \frac{1}{|\pi k|^2}\left(|f'(1)|+|f'(0)|+\frac{1}{\sqrt 2}\|f''\|_{L_2([0,1])}\right)\,\\
				&\lesssim |k|^{-2}\|f\|_{\Hmix^2([0,1])}
			\end{split}
		\end{align*}
		and we observe the desired decay rate of $|k|^{-2}$. 
	\end{proof}
	As for the multivariate situation we will prove the following stronger result which looks similar to the one in Theorem \ref{thm:tent} (and Proposition \ref{prop:Faber}). For this we will introduce the function space $\bE^2(\bbR^d)$ over $\bbR^d$ whose defining norm will be given in the statement.
	
	\begin{thm} \label{Thm:E^2_d}
		For any function $f\in \Hmix^2(\bbR^d)$ we have 
		\begin{align*}
			\|f\|_{\bE^2(\bbR^d)} \coloneqq \sup\limits_{\bk \in \bbZ^d} \prod\limits_{i=1}^d (1+|k_i|)^2\left(\sum\limits_{\bm \in\bbZ^d}|{[\mathcal{R}f(\cdot - \bm)]}^\wedge(\bk)|^2\right)^{1/2} \lesssim \|f\|_{\Hmix^2(\bbR^d)}.
		\end{align*}
	\end{thm}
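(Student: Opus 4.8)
The plan is to prove this multivariate statement by reducing it, via the tensor-product structure of both the tent transform and the Sobolev norm, to the univariate computation already carried out in Lemma \ref{lem_RmapsH2mixtoE21}. First I would establish the univariate $\bbR$-analogue: for a single-variable $H^2$ function $h$, the quantity $(1+|k|)^2 \left(\sum_{m \in \bbZ} |[\cR h(\cdot - m)]^\wedge(k)|^2\right)^{1/2}$ is controlled by $\|h\|_{H^2(\bbR)}$. Here $\cR$ is understood componentwise and the extra sum over the integer shifts $m$ and the Fourier transform is taken over one period, so the inner sum is a periodization. The key observation is that summing $|[\cR h(\cdot - m)]^\wedge(k)|^2$ over all integer shifts $m$ is, by a Plancherel/Poisson-type identity, comparable to the $L_2(\bbR)$-mass of the ``$k$-th frequency slice'' of $\cR h$ over the whole line; the $(1+|k|)^2$ weight then matches the second-derivative factor exactly as in the Lemma, where the integration-by-parts twice produces the $|k|^{-2}$ gain against $\|h''\|_{L_2}$ together with boundary terms that are absorbed by the $H^2$ norm.

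Next I would tensorize. Because $\cR$ acts coordinatewise and the Fourier coefficient over $\bbT^d$ factorizes as a product over the $d$ directions, one gets $[\cR f(\cdot - \bm)]^\wedge(\bk) = \prod_{i=1}^d [\cR f_i(\cdot - m_i)]^\wedge(k_i)$ whenever $f$ is an elementary tensor $f = f_1 \otimes \dots \otimes f_d$; the sum over $\bm \in \bbZ^d$ then splits as a product of the $d$ univariate periodization sums, and likewise $\prod_i (1+|k_i|)^2$ distributes over the factors. Taking the supremum over $\bk \in \bbZ^d$ turns into a product of univariate suprema, and applying the univariate bound in each factor yields $\|f\|_{\bE^2(\bbR^d)} \lesssim \prod_{i=1}^d \|f_i\|_{H^2(\bbR)}$, which is $\|f\|_{\Hmix^2(\bbR^d)}$ for tensors. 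The general case follows by the standard argument that finite linear combinations of tensor products are dense in $\Hmix^2(\bbR^d)$, combined with a density/continuity argument of exactly the flavour of Step 5 in the proof of Proposition \ref{prop:Faber}: one proves the estimate first for $f \in S(\bbR^d)$ (or a tensor-product dense subspace), checks both norms embed into a common space so limits are identified, and passes to the limit.

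The main obstacle I expect is making the periodization step fully rigorous, in particular handling the absolute value in $\cR$ and the interaction between the integer shifts and the tent structure. The tent map $x \mapsto |2x-1|$ is defined on $[0,1]$ and folds it onto itself, so when we extend to $\bbR^d$ via the shifts $f(\cdot - \bm)$ and take the periodized Fourier coefficients, one must verify carefully that $\sum_{m} |[\cR f(\cdot - m)]^\wedge(k)|^2$ genuinely reconstructs the full-line $L_2$-norm of the relevant frequency component and that the boundary terms arising from integration by parts (the $|f'(0)|, |f'(1)|$ type quantities in the Lemma) are uniformly controlled after this extension. A secondary technical point is justifying the interchange of the (infinite) shift-sum with the Fourier transform and the limiting argument for non-tensor $f$; this is routine once the univariate weighted $\ell_2$-bound is in hand, since the weights $\prod_i(1+|k_i|)^2$ are exactly the Fourier multipliers appearing in the $\Hmix^2(\bbR^d)$ norm \eqref{eq:10}.
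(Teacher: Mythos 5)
Your univariate step is sound (the paper's Lemma \ref{lem_RmapsH2mixtoE21} computation extends to the shifted intervals, and the lattice sum $\sum_m |h'(-m)|^2$ is easily absorbed by $\|h\|_{H^2(\bbR)}^2$ via a trace inequality on unit intervals), but there is a genuine gap at the tensorization step, and it sits exactly where the multivariate difficulty lives. Your factorization argument establishes the bound $\|f\|_{\bE^2(\bbR^d)} \lesssim \|f\|_{\Hmix^2(\bbR^d)}$ only for \emph{elementary tensors} $f = f_1 \otimes \cdots \otimes f_d$. The set of elementary tensors is not a linear subspace, and the estimate does not pass to their span: for $f = \sum_j f_{1,j}\otimes\cdots\otimes f_{d,j}$ the triangle inequality in $\bE^2$ only yields $\|f\|_{\bE^2} \lesssim \sum_j \prod_i \|f_{i,j}\|_{H^2(\bbR)}$, and this projective-type quantity is in general much larger than the Hilbert-space norm $\|f\|_{\Hmix^2}$ of the sum. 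So you never actually obtain the estimate on a dense \emph{subspace}, and the Step-5-style density argument you invoke cannot start. Your parenthetical ``one proves the estimate first for $f \in S(\bbR^d)$'' is precisely what your tensor argument does not deliver. The gap is fixable, but it requires a real additional ingredient: for each fixed $\bk$, view $A_\bk : f \mapsto \bigl([\cR f(\cdot-\bm)]^\wedge(\bk)\bigr)_{\bm\in\bbZ^d}$ as an operator $\Hmix^2(\bbR^d) \to \ell_2(\bbZ^d)$; it is the $d$-fold tensor product of the univariate operators $A_{k_i}: H^2(\bbR)\to\ell_2(\bbZ)$, and since both source and target are (up to equivalent norms) Hilbertian tensor products, the cross-norm identity $\|A_\bk\| \asymp \prod_i \|A_{k_i}\| \lesssim \prod_i (1+|k_i|)^{-2}$ gives the claim for all $f$. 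Without explicitly invoking this Hilbert tensor-product operator theorem (or an equivalent device), the proposal is incomplete.

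It is worth noting how the paper avoids this issue: it proves the estimate directly for \emph{all} Schwartz functions, not just tensors, by tensorizing the integration-by-parts \emph{formula} rather than the operator bound. This produces mixed terms of the form $\prod_i (1+|k_i|^2)^{-1}\bigl(\sum_{\bm\in\bbZ^n}\int_{\bbR^{d-n}} |f^{(\bbeta)}(\bm,\bx)|^2\,\diffd\bx\bigr)^{1/2}$ with $\bbeta \in \{1\}^n\times\{0,2\}^{d-n}$, i.e.\ lattice sums of point evaluations of mixed first derivatives in some coordinates coupled with $L_2$-integrals in the others; controlling these requires the Fourier-block decomposition and Shannon sampling argument recycled from the proof of Proposition \ref{prop:Faber}. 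These mixed terms never show up in the elementary-tensor computation, which is exactly why the tensor shortcut feels easier --- it silently skips the hard part. If you patch your argument with the cross-norm fact, your route is arguably cleaner (the 1D boundary terms are handled by an elementary trace inequality instead of the sampling-theorem machinery); if you cannot justify the cross-norm step, you must fall back on the paper's direct Schwartz-function estimate.
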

	
	\begin{proof} The proof is, at parts, similar to the one of Proposition \ref{prop:Faber}.
		
		{\em Step 1.} Analogous to the proof of Lemma \ref{lem_RmapsH2mixtoE21} for $f\in S(\bbR)$, $m \in \bbZ$ it holds
		\begin{align*}
			& [\cR f(\cdot-m)]^{\wedge}(k)\\
			= & -\frac{1}{(\pi k)^2} \left( f'(-m+1)+(-1)^{k+1}f'(-m)+\int_{-m}^{-m+1}f''(x)\cos(\pi k x) \, \diffd x \right)\\
			= & -\frac{1}{(\pi k)^2} \left( 2\chi^{}_{2\bbZ+1}(k)f'(-m)+\int_{-m}^{-m+1}f''(x)(1+\cos(\pi k x)) \, \diffd x \right)
		\end{align*}
		for $k\neq0$ and $[\cR f(\cdot-m)]^{\wedge}(0)=\int_{-m}^{-m+1}f(x) \, \diffd x$. 
		
		{\em Step 2.} For multivariate $f\in S(\bbR^d)$ by tensorization and, if necessary, by swapping differentiation and integration, reordering and using Cauchy--Schwarz inequality we obtain terms of the form
		\begin{align*}
			\prod_{i=1}^{d}\frac{1}{1+|k_i|^2}\left( \sum_{\bm \in \bbZ^n} \int_{\bbR^{d-n}} |f^{(\underline \beta)}(\bm,\bx)|^2 \, \diffd \bx \right)^{1/2}
		\end{align*}
		with $n \in \{0,\dots,d\}$ and $ \bbeta \in \{1\}^n\times\{0,2\}^{d-n}$ (In the cases were $n=0$ or $n=d$ the sum respectively the integral disappears).  To get rid of the point evaluations we fix $\bx\in \bbR^{d-n}$ and decompose $g_\bx:\bbR^n \to \bbR, \bs \mapsto f^{(\bbeta)}(\bs,\bx)$ as in \eqref{eq_fourierblocks}
		\begin{align*}
			g_\bx(s)=\sum_{\bl \in \bbN_0^{n}} g_{\bx,\bl}(s).
		\end{align*}
		and estimate 
		\begin{align*}
			\sum_{\bm \in \bbZ_{}^n}|g_{\bx,\bl}(\bm)|^2
			&\le 2^{|\bl|_1}\|g_{\bx,\bl}\|^2_{L_2(\bbR^n)}
			\le 2^{-|\bl|_1}\sum_{\bk \in \bbN_0^n} 2^{2|\bk|_1}\|g_{\bx,\bk}\|^2_{L_2(\bbR^n)}\\
			&\lesssim 2^{-|\bl|_1}\int_{\bbR^n} \left|\mathcal F_n[g_\bx](\bs)\prod_{j=1}^n (1+|s_j|)\right|^2 \,\diffd \bs \\
			&\le 2^{-|\bl|_1}\sum_{\bgamma\in \{0,1\}_{}^n}\|g_\bx^{( \bgamma)}\|^2_{L_2(\bbR^n)}.
		\end{align*}
		by using Shannon's sampling theorem and Plancherel’s identity.
		With that in mind we get
		\begin{align*}
			& \left( \sum_{\bm\in \mathbb Z^n} \int_{\bbR^{d-n}}|f^{(\bbeta)}(\bm,\bx)|^2 \, \diffd \bx\right)^{1/2}\\
			\lesssim & \sum_{\bl \in \bbN_0^n} 2^{-|\bl|_1/2} \left(\sum_{ \bgamma\in \{0,1\}^n}\int_{\bbR^{d-n}} \|g_\bx^{(\bgamma)}\|^2_{L_2(\bbR^n)} \,\diffd \bx\right)^{1/2}\\
			\lesssim & \sum_{\bgamma\in \{0,1,2\}^d}\| f^{(\bgamma)} \|_{L_2(\bbR^d)}
			\lesssim \|f\|_{\Hmix^2(\bbR^d)}.
		\end{align*}
		So for each $k \in \bbZ^d$ and $f \in S(\bbR^d)$ we have
		\begin{align*}
			\prod\limits_{i=1}^d (1+|k_i|)^2\left(\sum\limits_{\bm \in\bbZ^d}|{[\mathcal{R}f(\cdot - \bm)]}^\wedge(\bk)|^2\right)^{1/2} 
			\lesssim
			\|f\|_{\Hmix^2(\bbR^d)}.
		\end{align*}
		
		{\em Step 3.} In the last step we use a density argument to generalize the result to $f\in \Hmix^2(\bbR^d)$. Let $(f_n)_{n \in \bbN}\subset S(\bbR^d)$ be a sequence converging to $f$ in $\Hmix^2(\bbR^d)$. In step 2 we have seen that $(f_n)_{n\in \bbN}$ is also a Cauchy sequence in $\bE^2(\bbR^d)$. So there is a $\tilde f \in \bE^2(\bbR^d)$ such that $(f_n)_{n \in \bbN}$ is also converging to $\tilde f$ in $\bE^2(\bbR^d)$. Functions in $\bE^2(\bbR^d)$ do not have to be continuous but $\bE^2(\bbR^d)$ continuously embeds into $L_2(\bbR^d)$. This follows by an application of Parseval's identity. Indeed, for $g \in \bE^2(\bbR^d)$ it holds
		\begin{align*}
			\|g\|^2_{L_2(\bbR^d)} 
			&= \sum_{m \in \bbZ^d}\|g(\cdot-m)\|^2_{L_2([0,1]^d)}
			=\sum_{m \in \bbZ^d}\|\cR g(\cdot-m)\|^2_{L_2([0,1]^d)}\\
			&=\sum_{k \in \bbZ^d}  \prod_{i=1}^d\frac{1}{(1+|k_i|)^4}\prod_{i=1}^d{(1+|k_i|)^4} \sum_{m\in \bbZ^d}|[\cR g(\cdot-m)]^\wedge(k)|^2\\
			&\lesssim \|g\|^2_{\bE^2(\bbR^d)}.
		\end{align*}
		Now we get
		\begin{align*}
			\|f-\tilde f\|_{L_2(\bbR^d)} \lesssim \|f-f_n\|_{\Hmix^2(\bbR^d)}+\|f_n-\tilde f\|_{\bE^2(\bbR^d)}\xrightarrow[n\to \infty]{}0,
		\end{align*}
		that is $f=\tilde f$ almost everywhere and finally 
		\begin{align*}
			\|f\|_{\bE^2(\bbR^d)}=\lim_{n \to \infty}\|f_n\|_{\bE^2(\bbR^d)}\lesssim \lim_{n \to \infty}\|f_n\|_{\Hmix^2(\bbR^d)}=\|f\|_{\Hmix^2(\bbR^d)},
		\end{align*}
		the intended result.
	\end{proof}
	From the previous theorem we directly obtain the following result as a corollary. 
	
	\begin{cor} \label{cor:Korobov}
		The tent transform $\cR$ is a continuous mapping 
		$$
		\cR : \Hmix^2([0,1]^d) \to \bE^2_d.
		$$
	\end{cor}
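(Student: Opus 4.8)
The plan is to deduce the corollary from Theorem \ref{Thm:E^2_d} by passing through an $\Hmix^2(\bbR^d)$-extension and then isolating a single summand of the $\ell_2$-sum over $\bm$. First I would fix $f \in \Hmix^2([0,1]^d)$ and, using the equivalence of norms \eqref{eq:domain}, choose an extension $g \in \Hmix^2(\bbR^d)$ with $g|_{[0,1]^d} = f$ and $\|g\|_{\Hmix^2(\bbR^d)} \lesssim \|f\|_{\Hmix^2([0,1]^d)}$. The crucial observation is that the tent transform evaluates its argument only at points $(|2y_1-1|,\dots,|2y_d-1|) \in [0,1]^d$, so $\cR g$ and $\cR f$ agree on the fundamental cell $[0,1]^d$; consequently the torus Fourier coefficient entering the Korobov norm satisfies $\hat{(\cR f)}(\bk) = \hat{(\cR g)}(\bk)$ for every $\bk \in \bbZ^d$.

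Second, I would identify $\hat{(\cR g)}(\bk)$ with the $\bm = \mathbf 0$ term in Theorem \ref{Thm:E^2_d}. Indeed, the quantity $[\cR g(\cdot-\bm)]^\wedge(\bk)$ there is $\int_{[0,1]^d} \cR g(\bx-\bm) \exp(-2\pi\iu\,\bk^\top\bx)\,\diffd\bx$, and the choice $\bm = \mathbf 0$ recovers exactly the coefficient $\hat{(\cR g)}(\bk)$ used in the definition of $\|\cdot\|_{\bE^2_d}$. Since a single summand is dominated by the full $\ell_2$-sum over $\bm \in \bbZ^d$, I obtain for every $\bk$ the pointwise bound
\begin{equation*}
\prod_{i=1}^d (1+|k_i|)^2\,|\hat{(\cR f)}(\bk)| \leq \prod_{i=1}^d (1+|k_i|)^2 \left(\sum_{\bm \in \bbZ^d} |[\cR g(\cdot-\bm)]^\wedge(\bk)|^2\right)^{1/2}.
\end{equation*}

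Finally, taking the supremum over $\bk \in \bbZ^d$ turns the right-hand side into $\|g\|_{\bE^2(\bbR^d)}$, so that by Theorem \ref{Thm:E^2_d} and the choice of $g$,
\begin{equation*}
\|\cR f\|_{\bE^2_d} \leq \|g\|_{\bE^2(\bbR^d)} \lesssim \|g\|_{\Hmix^2(\bbR^d)} \lesssim \|f\|_{\Hmix^2([0,1]^d)},
\end{equation*}
which is the asserted continuity. I do not expect a genuine obstacle here, since the analytic content already resides in Theorem \ref{Thm:E^2_d}; the only points requiring care are the bookkeeping that matches the two notions of Fourier coefficient (the periodic one on $\bbT^d$ defining $\|\cdot\|_{\bE^2_d}$ versus the cell integral $[\,\cdot\,]^\wedge$ of the theorem) and the remark that $\cR$ is insensitive to the choice of extension, so that passing from $f$ to $g$ alters nothing on $[0,1]^d$.
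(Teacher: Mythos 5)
Your proposal is correct and fleshes out exactly the deduction the paper intends: the paper gives no explicit argument beyond ``from the previous theorem we directly obtain the following result,'' and the intended route is precisely yours --- extend $f$ to $g \in \Hmix^2(\bbR^d)$ via \eqref{eq:domain}, note that $\cR$ only sees values on $[0,1]^d$ so $\hat{(\cR f)}(\bk)$ equals the $\bm = \mathbf{0}$ summand in Theorem \ref{Thm:E^2_d}, and dominate that single term by the full $\ell_2$-sum. Your bookkeeping of the two notions of Fourier coefficient and of the extension's norm control is accurate, so there is nothing to add.
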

	
	This should be particularly helpful for the analysis of tent transformed lattice rules in $\Hmix^2([0,1]^d)$. 
	
	\section{Numerical experiments} \label{sec:numerics}
	
	We want to demonstrate our results on some numerical examples. The error $\QMC(X, \Hmix^2([0, 1]^d))$ can be calculated exactly using the formulas from Section \ref{subsec:Sobolev}. When using a computer for this task it is important to consider numerical imprecision from rounding due to the limited number of bits in the float format. This is crucial for larger numbers of points $N$, where we will need to increase the accuracy to get sensible results. All implementations were done in Python using the \texttt{decimal}-library.
	
	\subsection{Tent transformed nets in higher dimensions}
	
	\begin{figure}[htb]
		\begin{tikzpicture}[baseline, scale=0.967]
			\begin{axis}[
				font=\footnotesize,
				enlarge x limits=true,
				enlarge y limits=true,
				height=0.6\textwidth,
				grid=major,
				width=\textwidth,
				xmode=log,
				ymode=log,
				xlabel={$N$},
				ylabel={QMC worst case error},
				legend style={legend cell align=right, at={(0.97, 0.97)}},
				legend columns = 1,
				]
				\addplot[blue,mark=square,mark size=2.2pt,mark options={solid}] coordinates {
					(4, 0.20605851125198316)
					(8, 0.13475421781974734)
					(16, 0.06449605015829911)
					(32, 0.007290746506590049)
					(64, 0.0030031907057914108)
					(128, 0.0012823174647155304)
					(256, 0.0006707468865474971)
					(512, 0.0011607207486715857)
					(1024, 8.699309933592199e-05)
					(2048, 3.543775058010506e-05)
					(4096, 7.871731066909725e-06)
					(8192, 1.861129455341594e-06)
					(16384, 7.853685623460939e-07)
				};
				\addlegendentry{$K_1^3$}
				
				\addplot[magenta,mark=o,mark size=2.5pt,mark options={solid}] coordinates {
					(4, 0.27836542131444547)
					(8, 0.18011531253000726)
					(16, 0.09187615894924508)
					(32, 0.008324452211579923)
					(64, 0.003980734702404352)
					(128, 0.0013536074105748967)
					(256, 0.0006760128700089168)
					(512, 0.0031842582254181053)
					(1024, 0.0001786870187782744)
					(2048, 6.556822998180642e-05)
					(4096, 1.2767563484657267e-05)
					(8192, 3.0764360064204873e-06)
					(16384, 1.297247061327232e-06)
				};
				\addlegendentry{$K_2^3$}
				
				\addplot[green,mark=triangle,mark size=3pt,mark options={solid}] coordinates {
					(4, 0.4701831076771109)
					(8, 0.29201774837959765)
					(16, 0.11256500659922493)
					(32, 0.013650583717789986)
					(64, 0.005341746597102096)
					(128, 0.0018584180756665207)
					(256, 0.00091869684951764)
					(512, 0.001965730398560441)
					(1024, 0.00013179325808720187)
					(2048, 4.569387128187373e-05)
					(4096, 9.974433135226907e-06)
					(8192, 2.580588911054557e-06)
					(16384, 9.767575603136475e-07)
				};
				\addlegendentry{$K_3^3$}
				
				\addplot[orange,mark=star,mark size=2.2pt,mark options={solid}] coordinates {
					(4, 0.19703989367397748)
					(8, 0.05487567293942342)
					(16, 0.019386533292167846)
					(32, 0.0047304985632121484)
					(64, 0.0013806325916908377)
					(128, 0.00044855494142254635)
					(256, 0.00011053100620671699)
					(512, 3.077845500254035e-05)
					(1024, 7.594204356170112e-06)
					(2048, 1.859444507483469e-06)
					(4096, 4.6501719168680607e-07)
					(8192, 1.230421337028527e-07)
				};
				\addlegendentry{Untransformed}
				
				\addplot [forget plot,black,domain=30:20000, samples=100, dotted,ultra thick]{0.5*x^(-2)*(ln(x))^(5)};
				
				\addlegendimage{black,dotted,ultra thick}
				\addlegendentry{$\sim (\log N)^{5}/N^2$}
				
				\addplot [forget plot,gray,domain=4:5000, samples=100, dotted,ultra thick]{0.7*x^(-2)*(ln(x))^(1)};
				\addlegendimage{gray,dotted,ultra thick}
				\addlegendentry{$\sim (\log N)/N^2$}
			\end{axis}
		\end{tikzpicture}
		\caption{Behavior of the worst case error for tent transformed order $2$ nets in dimension $d=3$, measured for the kernels $K_j^3$. We compare this to the error for untransformed order $2$ nets, using the kernel $K_1^3$.}
		\label{fig:coparisson_kernels}
	\end{figure}
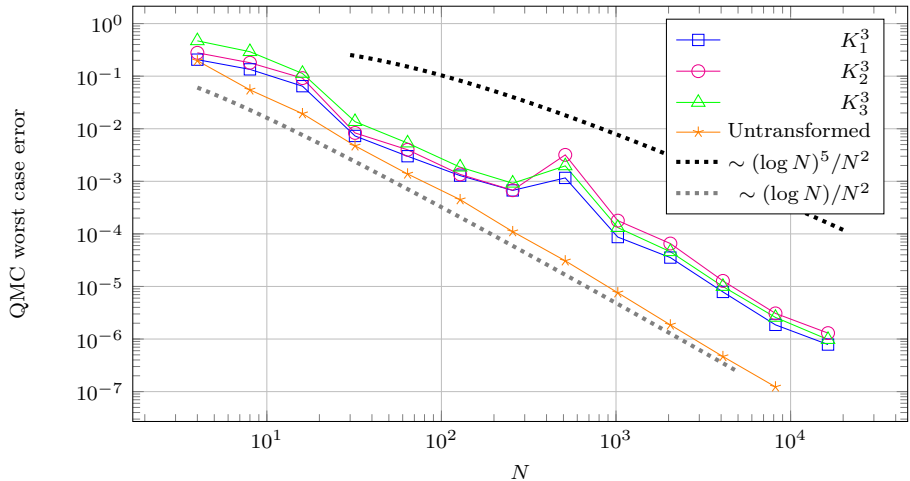 
	
	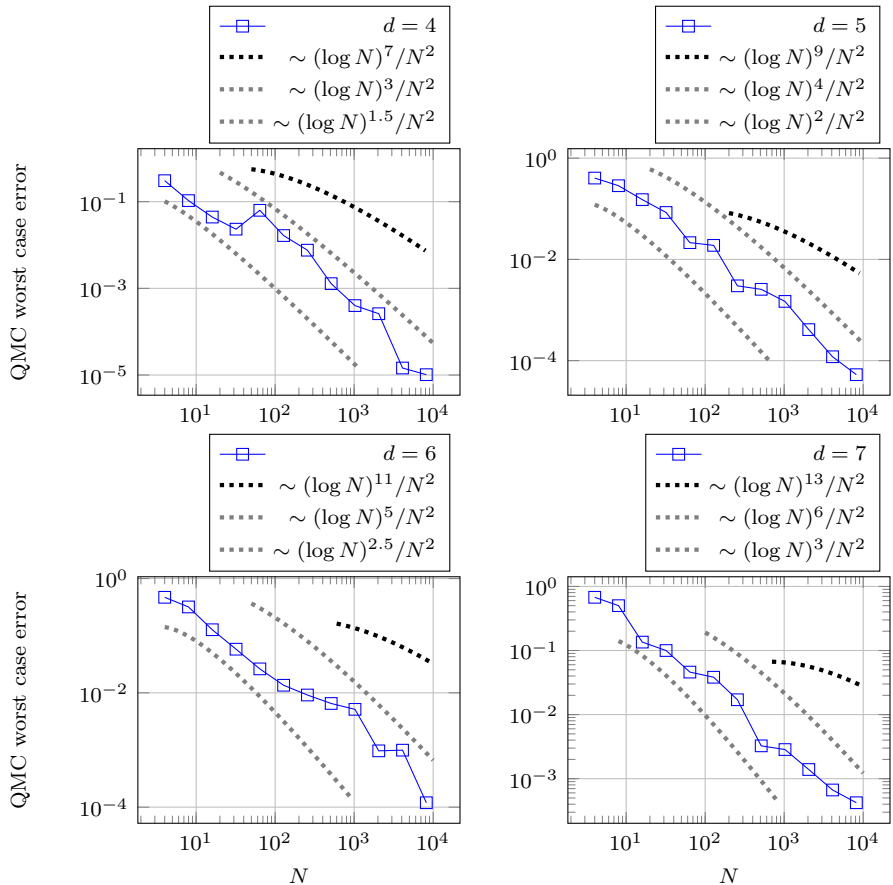
\begin{figure}[htb]
		\centering
		\begin{tikzpicture}[baseline] 
			\begin{axis}[
				font=\footnotesize,
				enlarge x limits=true,
				enlarge y limits=true,
				height=0.4\textwidth,
				grid=major,
				width=0.48\textwidth,
				xmode=log,
				ymode=log,
				ylabel={QMC worst case error},
				legend style={legend cell align=right, at={(0.97, 1.58)}},
				legend columns = 1,
				]
				\addplot[blue,mark=square,mark size=2.2pt,mark options={solid}] coordinates {
					(4, 0.3052461958982308)
					(8, 0.10591592785826713)
					(16, 0.044164306257995445)
					(32, 0.023202907956279616)
					(64, 0.06319987106625677)
					(128, 0.0165360587379965)
					(256, 0.007596940125548403)
					(512, 0.0012840441086573557)
					(1024, 0.0004000741766630895)
					(2048, 0.0002599312168166372)
					(4096, 1.4512468085655088e-05)
					(8192, 1.0198153834635195e-05)
				};
				\addlegendentry{$d=4$}
				\addplot [forget plot,black,domain=50:8000, samples=100, dotted,ultra thick]{0.1*x^(-2)*(ln(x))^(7)};
				\addlegendimage{black,dotted,ultra thick}
				\addlegendentry{$\sim (\log N)^{7}/N^2$}
				\addplot [forget plot,gray,domain=20:10000, samples=100, dotted,ultra thick]{7*x^(-2)*(ln(x))^(3)};
				\addlegendimage{gray,dotted,ultra thick}
				\addlegendentry{$\sim (\log N)^{3}/N^2$}
				\addplot [forget plot,gray,domain=4:1100, samples=100, dotted,ultra thick]{x^(-2)*(ln(x))^(1.5)};
				\addlegendimage{gray,dotted,ultra thick}
				\addlegendentry{$\sim (\log N)^{1.5}/N^2$}
			\end{axis}
		\end{tikzpicture}
		\begin{tikzpicture}[baseline] 
			\begin{axis}[
				font=\footnotesize,
				enlarge x limits=true,
				enlarge y limits=true,
				height=0.4\textwidth,
				grid=major,
				width=0.48\textwidth,
				xmode=log,
				ymode=log,
				legend style={legend cell align=right, at={(0.97, 1.58)}},
				legend columns = 1,
				]
				\addplot[blue,mark=square,mark size=2.2pt,mark options={solid}] coordinates {
					(4, 0.4040210276596287)
					(8, 0.2853696866553136)
					(16, 0.1515898880496842)
					(32, 0.08445346485991725)
					(64, 0.02141253351905513)
					(128, 0.018928585392465943)
					(256, 0.0030051393948116695)
					(512, 0.002562528133670654)
					(1024, 0.0014796543716086921)
					(2048, 0.00041004667346681705)
					(4096, 0.00011942476112681603)
					(8192, 5.2882148481344203e-05)
				};
				\addlegendentry{$d=5$}
				\addplot [forget plot,black,domain=200:9000, samples=100, dotted,ultra thick]{0.001*x^(-2)*(ln(x))^(9)};
				\addlegendimage{black,dotted,ultra thick}
				\addlegendentry{$\sim (\log N)^{9}/N^2$}
				\addplot [forget plot,gray,domain=20:10000, samples=100, dotted,ultra thick]{3*x^(-2)*(ln(x))^(4)};
				\addlegendimage{gray,dotted,ultra thick}
				\addlegendentry{$\sim (\log N)^{4}/N^2$}
				\addplot [forget plot,gray,domain=4:700, samples=100, dotted,ultra thick]{x^(-2)*(ln(x))^(2)};
				\addlegendimage{gray,dotted,ultra thick}
				\addlegendentry{$\sim (\log N)^{2}/N^2$}
				
			\end{axis}
		\end{tikzpicture}
		\begin{tikzpicture}[baseline] 
			\begin{axis}[
				font=\footnotesize,
				enlarge x limits=true,
				enlarge y limits=true,
				height=0.4\textwidth,
				grid=major,
				width=0.48\textwidth,
				xmode=log,
				ymode=log,
				xlabel={$N$},
				ylabel={QMC worst case error},
				legend style={legend cell align=right, at={(0.97, 1.58)}},
				legend columns = 1,
				]
				\addplot[blue,mark=square,mark size=2.2pt,mark options={solid}] coordinates {
					(4, 0.47001787531571904)
					(8, 0.31868551376625637)
					(16, 0.12724157711951184)
					(32, 0.058053770913317704)
					(64, 0.02626196579073494)
					(128, 0.01357689520451767)
					(256, 0.00916651506339192)
					(512, 0.006516209439708014)
					(1024, 0.00514293168373351)
					(2048, 0.0009666593400009463)
					(4096, 0.0009932525580204218)
					(8192, 0.00011948316619382709)
				};
				\addlegendentry{$d=6$}
				\addplot [forget plot,black,domain=600:10000, samples=100, dotted,ultra thick]{0.00008*x^(-2)*(ln(x))^(11)};
				\addlegendimage{black,dotted,ultra thick}
				\addlegendentry{$\sim (\log N)^{11}/N^2$}
				\addplot [forget plot,gray,domain=50:10000, samples=100, dotted,ultra thick]{1*x^(-2)*(ln(x))^(5)};
				\addlegendimage{gray,dotted,ultra thick}
				\addlegendentry{$\sim (\log N)^{5}/N^2$}
				\addplot [forget plot,gray,domain=4:900, samples=100, dotted,ultra thick]{x^(-2)*(ln(x))^(2.5)};
				\addlegendimage{gray,dotted,ultra thick}
				\addlegendentry{$\sim (\log N)^{2.5}/N^2$}
			\end{axis}
		\end{tikzpicture}
		\begin{tikzpicture}[baseline] 
			\begin{axis}[
				font=\footnotesize,
				enlarge x limits=true,
				enlarge y limits=true,
				height=0.4\textwidth,
				grid=major,
				width=0.48\textwidth,
				xmode=log,
				ymode=log,
				xlabel={$N$},
				legend style={legend cell align=right, at={(0.97, 1.58)}},
				legend columns = 1,
				]
				\addplot[blue,mark=square,mark size=2.2pt,mark options={solid}] coordinates {
					(4, 0.676391240139576)
					(8, 0.5009737073493481)
					(16, 0.13517524145356824)
					(32, 0.0999235689006232)
					(64, 0.046046840530074955)
					(128, 0.038270697747904064)
					(256, 0.017052458270655862)
					(512, 0.003253202559627184)
					(1024, 0.0028477241725251997)
					(2048, 0.001389735793673517)
					(4096, 0.0006665480548945198)
					(8192, 0.000421511176808294)
				};
				\addlegendentry{$d=7$}
				\addplot [forget plot,black,domain=700:9000, samples=100, dotted,ultra thick]{0.0000008*x^(-2)*(ln(x))^(13)};
				\addlegendimage{black,dotted,ultra thick}
				\addlegendentry{$\sim (\log N)^{13}/N^2$}
				\addplot [forget plot,gray,domain=100:10000, samples=100, dotted,ultra thick]{0.2*x^(-2)*(ln(x))^(6)};
				\addlegendimage{gray,dotted,ultra thick}
				\addlegendentry{$\sim (\log N)^{6}/N^2$}
				\addplot [forget plot,gray,domain=8:800, samples=100, dotted,ultra thick]{x^(-2)*(ln(x))^(3)};
				\addlegendimage{gray,dotted,ultra thick}
				\addlegendentry{$\sim (\log N)^{3}/N^2$}
			\end{axis}
		\end{tikzpicture}
		\caption{Dimension $d=4, 5, 6, 7$, kernel $K_1^d$.}
		\label{fig:higher_dim}
	\end{figure} 
	
	Let us first compare the errors given by the kernels $K^d_j, j=1, 2, 3$. Since the implied norms are equivalent, all three curves should follow a similar trend. This is visible from the plots in Figure \ref{fig:coparisson_kernels}, where we used the tent transformed order $2$ nets as discussed in Section \ref{sec:nets} in $d=3$ dimensions. We also compare these plots to the upper bound of $(\log N)^5/N^2$ and the optimal lower bound given by $(\log N)/N^2$. It would be interesting to investigate which bound is closer to the true decay of the plotted curves. For the remainder we will only look at the error calculated from $K_1^d$.
	
	Next, we will look at the picture in higher dimensions. In Figure \ref{fig:higher_dim} we plotted the worst case error in dimensions $d=4, 5, 6, 7$ (note that order $5$ nets in dimension $7$ cannot be constructed using the library \cite{KN2016} as described in Section \ref{subsec:construction} since it lists Niederreiter--Xing constructions only up to $33$ dimensions). We see that the plots stay, with some fluctuations, between the predicted bounds. For the upper bound we note that due to the relatively high power of the logarithm we are most likely still in the preasymptotic regime. It is likely that the bound can be reduced considerably and it might be be possible to show something along the lines of $N^{-2} (\log N)^{d-1}$ for tent transformed order $2$ nets. We also plotted these curves, although another exponent for the logarithmic term could also be possible.
	
	\subsection{Halton, Fibonacci, Zaremba in $d=2$}
	
	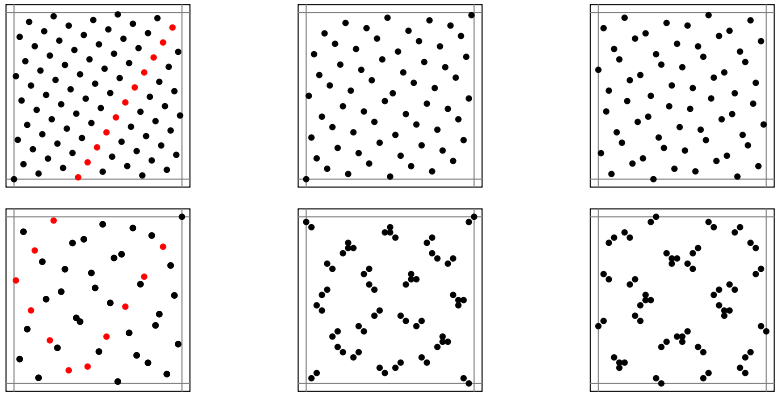
\begin{figure}[htb]
		\centering
		\begin{tikzpicture}
			\begin{axis}[
				plot box ratio = 1 1,
				xmin=-1/21,xmax=22/21,ymin=-1/21,ymax=22/21,
				font=\footnotesize,
				height=0.33\textwidth,
				width=0.33\textwidth,
				xticklabel = \empty,
				yticklabel = \empty,
				ytick style={draw=none},
				xtick style={draw=none},
				]
				\addplot[only marks,black,mark=*,mark size=1pt,mark options={solid}] coordinates { 
					(0/89, 0/89) (1/89, 55/89) (2/89, 21/89) (3/89, 76/89) (4/89, 42/89) (5/89, 8/89) (6/89, 63/89) (7/89, 29/89) (8/89, 84/89) (9/89, 50/89) (10/89, 16/89) (11/89, 71/89) (12/89, 37/89) (13/89, 3/89) (14/89, 58/89) (15/89, 24/89) (16/89, 79/89) (17/89, 45/89) (18/89, 11/89) (19/89, 66/89) (20/89, 32/89) (21/89, 87/89) (22/89, 53/89) (23/89, 19/89) (24/89, 74/89) (25/89, 40/89) (26/89, 6/89) (27/89, 61/89) (28/89, 27/89) (29/89, 82/89) (30/89, 48/89) (31/89, 14/89) (32/89, 69/89) (33/89, 35/89) (35/89, 56/89) (36/89, 22/89) (37/89, 77/89) (38/89, 43/89) (40/89, 64/89) (41/89, 30/89) (42/89, 85/89) (43/89, 51/89) (45/89, 72/89) (46/89, 38/89) (47/89, 4/89) (48/89, 59/89) (50/89, 80/89) (51/89, 46/89) (52/89, 12/89) (53/89, 67/89) (55/89, 88/89) (56/89, 54/89) (57/89, 20/89) (58/89, 75/89) (60/89, 7/89) (61/89, 62/89) (62/89, 28/89) (63/89, 83/89) (65/89, 15/89) (66/89, 70/89) (67/89, 36/89) (68/89, 2/89) (70/89, 23/89) (71/89, 78/89) (72/89, 44/89) (73/89, 10/89) (75/89, 31/89) (76/89, 86/89) (77/89, 52/89) (78/89, 18/89) (80/89, 39/89) (81/89, 5/89) (82/89, 60/89) (83/89, 26/89) (85/89, 47/89) (86/89, 13/89) (87/89, 68/89) (88/89, 34/89)
				};
				\addplot[only marks,red,mark=*,mark size=1pt,mark options={solid}] coordinates { 
					(34/89, 1/89) (39/89, 9/89) (44/89, 17/89) (49/89, 25/89) (54/89, 33/89) (59/89, 41/89) (64/89, 49/89) (69/89, 57/89) (74/89, 65/89) (79/89, 73/89) (84/89, 81/89)
				};
				\addplot[gray,mark options={solid}] coordinates {
					(0, -0.05) (0, 1.05)
				};
				\addplot[gray,mark options={solid}] coordinates {
					(1.0, -0.05) (1.0, 1.05)
				};
				\addplot[gray,mark options={solid}] coordinates {
					(-0.05, 0) (1.05, 0)
				};
				\addplot[gray,mark options={solid}] coordinates {
					(-0.05, 1.0) (1.05, 1.0)
				};
			\end{axis}
		\end{tikzpicture}
		\begin{tikzpicture}
			\begin{axis}[
				plot box ratio = 1 1,
				xmin=-1/21,xmax=22/21,ymin=-1/21,ymax=22/21,
				font=\footnotesize,
				height=0.33\textwidth,
				width=0.33\textwidth,
				xticklabel = \empty,
				yticklabel = \empty,
				ytick style={draw=none},
				xtick style={draw=none},
				]
				\addplot[only marks,black,mark=*,mark size=1pt,mark options={solid}] coordinates {
					(0/64, 0/64) (1/64, 32/64) (2/64, 16/64) (3/64, 48/64) (4/64, 8/64) (5/64, 40/64) (6/64, 24/64) (7/64, 56/64) (8/64, 4/64) (9/64, 36/64) (10/64, 20/64) (11/64, 52/64) (12/64, 12/64) (13/64, 44/64) (14/64, 28/64) (15/64, 60/64) (16/64, 2/64) (17/64, 34/64) (18/64, 18/64) (19/64, 50/64) (20/64, 10/64) (21/64, 42/64) (22/64, 26/64) (23/64, 58/64) (24/64, 6/64) (25/64, 38/64) (26/64, 22/64) (27/64, 54/64) (28/64, 14/64) (29/64, 46/64) (30/64, 30/64) (31/64, 62/64) (32/64, 1/64) (33/64, 33/64) (34/64, 17/64) (35/64, 49/64) (36/64, 9/64) (37/64, 41/64) (38/64, 25/64) (39/64, 57/64) (40/64, 5/64) (41/64, 37/64) (42/64, 21/64) (43/64, 53/64) (44/64, 13/64) (45/64, 45/64) (46/64, 29/64) (47/64, 61/64) (48/64, 3/64) (49/64, 35/64) (50/64, 19/64) (51/64, 51/64) (52/64, 11/64) (53/64, 43/64) (54/64, 27/64) (55/64, 59/64) (56/64, 7/64) (57/64, 39/64) (58/64, 23/64) (59/64, 55/64) (60/64, 15/64) (61/64, 47/64) (62/64, 31/64) (63/64, 63/64)
				};
				\addplot[gray,mark options={solid}] coordinates {
					(0, -0.05) (0, 1.05)
				};
				\addplot[gray,mark options={solid}] coordinates {
					(1.0, -0.05) (1.0, 1.05)
				};
				\addplot[gray,mark options={solid}] coordinates {
					(-0.05, 0) (1.05, 0)
				};
				\addplot[gray,mark options={solid}] coordinates {
					(-0.05, 1.0) (1.05, 1.0)
				};
			\end{axis}
		\end{tikzpicture}
		\begin{tikzpicture}
			\begin{axis}[
				plot box ratio = 1 1,
				xmin=-1/21,xmax=22/21,ymin=-1/21,ymax=22/21,
				font=\footnotesize,
				height=0.33\textwidth,
				width=0.33\textwidth,
				xticklabel = \empty,
				yticklabel = \empty,
				ytick style={draw=none},
				xtick style={draw=none},
				]
				\addplot[only marks,black,mark=*,mark size=1pt,mark options={solid}] coordinates {
					(0/64, 42/64) (1/64, 10/64) (2/64, 58/64) (3/64, 26/64) (4/64, 34/64) (5/64, 2/64) (6/64, 50/64) (7/64, 18/64) (8/64, 46/64) (9/64, 14/64) (10/64, 62/64) (11/64, 30/64) (12/64, 38/64) (13/64, 6/64) (14/64, 54/64) (15/64, 22/64) (16/64, 40/64) (17/64, 8/64) (18/64, 56/64) (19/64, 24/64) (20/64, 32/64) (21/64, 0/64) (22/64, 48/64) (23/64, 16/64) (24/64, 44/64) (25/64, 12/64) (26/64, 60/64) (27/64, 28/64) (28/64, 36/64) (29/64, 4/64) (30/64, 52/64) (31/64, 20/64) (32/64, 43/64) (33/64, 11/64) (34/64, 59/64) (35/64, 27/64) (36/64, 35/64) (37/64, 3/64) (38/64, 51/64) (39/64, 19/64) (40/64, 47/64) (41/64, 15/64) (42/64, 63/64) (43/64, 31/64) (44/64, 39/64) (45/64, 7/64) (46/64, 55/64) (47/64, 23/64) (48/64, 41/64) (49/64, 9/64) (50/64, 57/64) (51/64, 25/64) (52/64, 33/64) (53/64, 1/64) (54/64, 49/64) (55/64, 17/64) (56/64, 45/64) (57/64, 13/64) (58/64, 61/64) (59/64, 29/64) (60/64, 37/64) (61/64, 5/64) (62/64, 53/64) (63/64, 21/64) 
				};
				\addplot[gray,mark options={solid}] coordinates {
					(0, -0.05) (0, 1.05)
				};
				\addplot[gray,mark options={solid}] coordinates {
					(1.0, -0.05) (1.0, 1.05)
				};
				\addplot[gray,mark options={solid}] coordinates {
					(-0.05, 0) (1.05, 0)
				};
				\addplot[gray,mark options={solid}] coordinates {
					(-0.05, 1.0) (1.05, 1.0)
				};
			\end{axis}
		\end{tikzpicture}
		
		\begin{tikzpicture}
			\begin{axis}[
				plot box ratio = 1 1,
				xmin=-1/21,xmax=22/21,ymin=-1/21,ymax=22/21,
				font=\footnotesize,
				height=0.33\textwidth,
				width=0.33\textwidth,
				xticklabel = \empty,
				yticklabel = \empty,
				ytick style={draw=none},
				xtick style={draw=none},
				]
				\addplot[only marks,black,mark=*,mark size=1pt,mark options={solid}] coordinates { 
					(89/89, 89/89) (87/89, 21/89) (85/89, 47/89) (83/89, 63/89) (81/89, 5/89) (77/89, 37/89) (75/89, 31/89) (73/89, 79/89) (71/89, 11/89) (67/89, 53/89) (65/89, 15/89) (63/89, 83/89) (61/89, 27/89) (57/89, 69/89) (55/89, 1/89) (53/89, 67/89) (51/89, 43/89) (47/89, 85/89) (45/89, 17/89) (43/89, 51/89) (41/89, 59/89) (37/89, 77/89) (35/89, 33/89) (33/89, 35/89) (31/89, 75/89) (27/89, 61/89) (25/89, 49/89) (23/89, 19/89) (17/89, 45/89) (15/89, 65/89) (13/89, 3/89) (7/89, 29/89) (5/89, 81/89) (3/89, 13/89) (3/89, 13/89) (5/89, 81/89) (7/89, 29/89) (13/89, 3/89) (15/89, 65/89) (17/89, 45/89) (23/89, 19/89) (25/89, 49/89) (27/89, 61/89) (31/89, 75/89) (33/89, 35/89) (35/89, 33/89) (37/89, 77/89) (41/89, 59/89) (43/89, 51/89) (45/89, 17/89) (47/89, 85/89) (51/89, 43/89) (53/89, 67/89) (55/89, 1/89) (57/89, 69/89) (61/89, 27/89) (63/89, 83/89) (65/89, 15/89) (67/89, 53/89) (71/89, 11/89) (73/89, 79/89) (75/89, 31/89) (77/89, 37/89) (81/89, 5/89) (83/89, 63/89) (85/89, 47/89) (87/89, 21/89)
				};
				\addplot[only marks,red,mark=*,mark size=1pt,mark options={solid}] coordinates { 
					(21/89, 87/89) (11/89, 71/89) (1/89, 55/89) (9/89, 39/89) (19/89, 23/89) (29/89, 7/89) (39/89, 9/89) (49/89, 25/89) (59/89, 41/89) (69/89, 57/89) (79/89, 73/89)
				};
				\addplot[gray,mark options={solid}] coordinates {
					(0, -0.05) (0, 1.05)
				};
				\addplot[gray,mark options={solid}] coordinates {
					(1.0, -0.05) (1.0, 1.05)
				};
				\addplot[gray,mark options={solid}] coordinates {
					(-0.05, 0) (1.05, 0)
				};
				\addplot[gray,mark options={solid}] coordinates {
					(-0.05, 1.0) (1.05, 1.0)
				};
			\end{axis}
		\end{tikzpicture}
		\begin{tikzpicture}
			\begin{axis}[
				plot box ratio = 1 1,
				xmin=-1/21,xmax=22/21,ymin=-1/21,ymax=22/21,
				font=\footnotesize,
				height=0.33\textwidth,
				width=0.33\textwidth,
				xticklabel = \empty,
				yticklabel = \empty,
				ytick style={draw=none},
				xtick style={draw=none},
				]
				\addplot[only marks,black,mark=*,mark size=1pt,mark options={solid}] coordinates {
					(64/64, 64/64) (62/64, 0/64) (60/64, 32/64) (58/64, 32/64) (56/64, 48/64) (54/64, 16/64) (52/64, 16/64) (50/64, 48/64) (48/64, 56/64) (46/64, 8/64) (44/64, 24/64) (42/64, 40/64) (40/64, 40/64) (38/64, 24/64) (36/64, 8/64) (34/64, 56/64) (32/64, 60/64) (30/64, 4/64) (28/64, 28/64) (26/64, 36/64) (24/64, 44/64) (22/64, 20/64) (20/64, 12/64) (18/64, 52/64) (16/64, 52/64) (14/64, 12/64) (12/64, 20/64) (10/64, 44/64) (8/64, 36/64) (6/64, 28/64) (4/64, 4/64) (2/64, 60/64) (0/64, 62/64) (2/64, 2/64) (4/64, 30/64) (6/64, 34/64) (8/64, 46/64) (10/64, 18/64) (12/64, 14/64) (14/64, 50/64) (16/64, 54/64) (18/64, 10/64) (20/64, 22/64) (22/64, 42/64) (24/64, 38/64) (26/64, 26/64) (28/64, 6/64) (30/64, 58/64) (32/64, 58/64) (34/64, 6/64) (36/64, 26/64) (38/64, 38/64) (40/64, 42/64) (42/64, 22/64) (44/64, 10/64) (46/64, 54/64) (48/64, 50/64) (50/64, 14/64) (52/64, 18/64) (54/64, 46/64) (56/64, 34/64) (58/64, 30/64) (60/64, 2/64) (62/64, 62/64) 
				};
				\addplot[gray,mark options={solid}] coordinates {
					(0, -0.05) (0, 1.05)
				};
				\addplot[gray,mark options={solid}] coordinates {
					(1.0, -0.05) (1.0, 1.05)
				};
				\addplot[gray,mark options={solid}] coordinates {
					(-0.05, 0) (1.05, 0)
				};
				\addplot[gray,mark options={solid}] coordinates {
					(-0.05, 1.0) (1.05, 1.0)
				};
			\end{axis}
		\end{tikzpicture}
		\begin{tikzpicture}
			\begin{axis}[
				plot box ratio = 1 1,
				xmin=-1/21,xmax=22/21,ymin=-1/21,ymax=22/21,
				font=\footnotesize,
				height=0.33\textwidth,
				width=0.33\textwidth,
				xticklabel = \empty,
				yticklabel = \empty,
				ytick style={draw=none},
				xtick style={draw=none},
				]
				\addplot[only marks,black,mark=*,mark size=1pt,mark options={solid}] coordinates {
					(64/64, 20/64) (62/64, 44/64) (60/64, 52/64) (58/64, 12/64) (56/64, 4/64) (54/64, 60/64) (52/64, 36/64) (50/64, 28/64) (48/64, 28/64) (46/64, 36/64) (44/64, 60/64) (42/64, 4/64) (40/64, 12/64) (38/64, 52/64) (36/64, 44/64) (34/64, 20/64) (32/64, 16/64) (30/64, 48/64) (28/64, 48/64) (26/64, 16/64) (24/64, 0/64) (22/64, 64/64) (20/64, 32/64) (18/64, 32/64) (16/64, 24/64) (14/64, 40/64) (12/64, 56/64) (10/64, 8/64) (8/64, 8/64) (6/64, 56/64) (4/64, 40/64) (2/64, 24/64) (0/64, 22/64) (2/64, 42/64) (4/64, 54/64) (6/64, 10/64) (8/64, 6/64) (10/64, 58/64) (12/64, 38/64) (14/64, 26/64) (16/64, 30/64) (18/64, 34/64) (20/64, 62/64) (22/64, 2/64) (24/64, 14/64) (26/64, 50/64) (28/64, 46/64) (30/64, 18/64) (32/64, 18/64) (34/64, 46/64) (36/64, 50/64) (38/64, 14/64) (40/64, 2/64) (42/64, 62/64) (44/64, 34/64) (46/64, 30/64) (48/64, 26/64) (50/64, 38/64) (52/64, 58/64) (54/64, 6/64) (56/64, 10/64) (58/64, 54/64) (60/64, 42/64) (62/64, 22/64)  
				};
				\addplot[gray,mark options={solid}] coordinates {
					(0, -0.05) (0, 1.05)
				};
				\addplot[gray,mark options={solid}] coordinates {
					(1.0, -0.05) (1.0, 1.05)
				};
				\addplot[gray,mark options={solid}] coordinates {
					(-0.05, 0) (1.05, 0)
				};
				\addplot[gray,mark options={solid}] coordinates {
					(-0.05, 1.0) (1.05, 1.0)
				};
			\end{axis}
		\end{tikzpicture}
		\caption{The considered point sets for $d=2$. From left to right: Fibonacci ($N = 89$), Halton ($N=64$), Zaremba ($N = 64$). Top row: untransformed ($Y \sbse \bbT^2$). Bottom row: tent transformed ($X = \cR Y \sbse [0, 1]^2$). Some points are colored red to show how they map under the tent transform.}
		\label{fig:plane_point_sets}
	\end{figure} 
	
	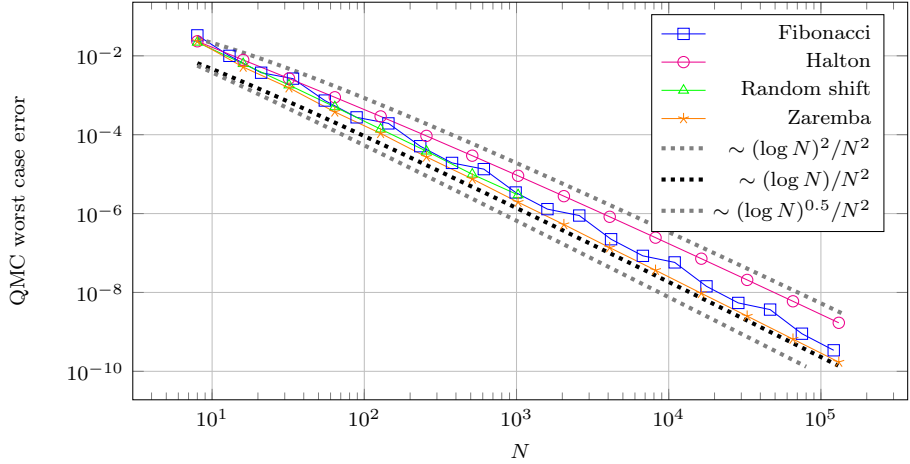
\begin{figure}[htb]
		\begin{tikzpicture}[baseline, scale=0.967]
			\begin{axis}[
				font=\footnotesize,
				enlarge x limits=true,
				enlarge y limits=true,
				height=0.58\textwidth,
				grid=major,
				width=\textwidth,
				xmode=log,
				ymode=log,
				xlabel={$N$},
				ylabel={QMC worst case error},
				legend style={legend cell align=right, at={(0.97, 0.97)}},
				legend columns = 1,
				]
				\addplot[blue,mark=square,mark size=2.2pt,mark options={solid}] coordinates {
					(8, 0.03302638668049939158417690712)
					(13, 0.009977467123244848763750672049)
					(21, 0.003728202477392069525523423106)
					(34, 0.002639228758110435981997928028)
					(55, 0.0007338701865181458071172582848)
					(89, 0.0002750918918299115141351928813)
					(144, 0.0001935872110368731490703700744)
					(233, 0.00005107660256386584154575624214)
					(377, 0.00001924883371081124613495717813)
					(610, 0.00001339066242442836746460185805)
					(987, 0.000003425857239552600212815615374)
					(1597, 0.000001294978704119109056787165912)
					(2584, 8.917637535811213940775062683e-7)
					(4181, 2.235715558561911515687599703e-7)
					(6765, 8.467924270187470211041746534e-8)
					(10946, 5.782149699965403005736672906e-8)
					(17711, 1.429162205566603975746391899e-8)
					(28657, 5.420591731813049946920910228e-9)
					(46368, 3.675624525029726155856346121e-9)
					(75025, 8.990281269237353510488018932e-10)
					(121393, 3.413661934052638435050907592e-10)
				};
				\addlegendentry{Fibonacci}
				
				\addplot[magenta,mark=o,mark size=2.2pt,mark options={solid}] coordinates {
					(8, 0.02336484597622991)
					(16, 0.007864130072342644)
					(32, 0.0026631296907772845)
					(64, 0.0008884917480404264)
					(128, 0.00029073922487487093)
					(256, 9.32704369872836e-05)
					(512, 2.9356790230768647e-05)
					(1024, 0.000009077766563501195584155979303)
					(2048, 0.000002762143118486200381555248794)
					(4096, 8.283606176982401666449943441e-7)
					(8192, 2.4523003357120434e-07)
					(16384, 7.176683770812951e-08)
					(32768, 2.0788199188746484e-08)
					(65536, 5.966681453622273e-09)
					(131072, 1.698602035792963483754218190e-9)
				};
				\addlegendentry{Halton}
				
				\addplot[green,mark=triangle,mark size=2.2pt,mark options={solid}] coordinates {
					(8, 0.02201659153507089341483764516)
					(16, 0.006301298852200637506394107996)
					(32, 0.001813900344770323201740000376)
					(64, 0.0005068271601736927733612984975)
					(128, 0.0001383667353963565227128971385)
					(256, 0.00003820058065403319918054377908)
					(512, 0.000009923272952228823666942131956)
					(1024, 0.000002973907684589775833233206403)
				};
				\addlegendentry{Random shift}
				
				\addplot[orange,mark=star,mark size=2.2pt,mark options={solid}] coordinates {
					(8, 0.02336484597622991069673846458)
					(16, 0.005174903475306061992784737502)
					(32, 0.001545345015799326088310796345)
					(64, 0.0003776557434983195163229354884)
					(128, 0.0001077836849599638568926136977)
					(256, 0.00002726989921941751547843932061)
					(512, 0.000007555812349623088277189542535)
					(1024, 0.000001942436292453704902202823922)
					(2048, 5.271768515742112893631983427e-7)
					(4096, 1.366484567658486218588371331e-7)
					(8192, 3.653010150630572775660420486e-8)
					(16384, 9.509427279915441889787123433e-9)
					(32768, 2.513492793106835227951374960e-9)
					(65536, 6.556179199808376195693619869e-10)
					(131072, 1.717586387929294227730861795e-10)
				};
				\addlegendentry{Zaremba}
				
				
				\addplot [forget plot,gray,domain=8:140000, samples=100, dotted,ultra thick]{0.4*x^(-2)*ln(x)^2};
				\addlegendimage{gray,dotted,ultra thick}
				\addlegendentry{$\sim (\log N)^2/N^2$}
				
				\addplot [forget plot,black,domain=8:130000, samples=100, dotted,ultra thick]{0.2*x^(-2)*ln(x)};
				\addlegendimage{black,dotted,ultra thick}
				\addlegendentry{$\sim (\log N)/N^2$}
				
				\addplot [forget plot,gray,domain=8:80000, samples=100, dotted,ultra thick]{0.25*x^(-2)*ln(x)^(0.5)};
				\addlegendimage{gray,dotted,ultra thick}
				\addlegendentry{$\sim (\log N)^{0.5}/N^2$}
			\end{axis}
		\end{tikzpicture}
		\caption{Dimension $d=2$, kernel $K_1^2$, tent transformed Fibonacci lattices, Halton point sets (in base $2$), random digital shifts of Halton point sets and Zaremba point sets.}
		\label{fig:fib_halt_zar}
	\end{figure} 
	
	For comparison we consider three common constructions in the case $d=2$. Figure \ref{fig:plane_point_sets} shows the Fibonacci lattice, Halton points and Zaremba points, untransformed in the torus and then tent transformed into the square. Figure \ref{fig:fib_halt_zar} shows the corresponding worst case errors. Let us first look at the tent transformed Fibonacci lattice (which can be a multiset since two different points of the lattice can be transformed to the same point). We observe a close fit to the rate of $(\log N)/N^2$. A corresponding upper bound can be derived from Corollary \ref{cor:Korobov} together with the optimality of the Fibonacci lattice in $\Hmix^1(\bbT^d)$, see \cite[Section 8.4]{DTU18} and the references therein. Tent transformed lattices in higher dimensions have been  investigated in more detail in \cite{CoKuNuSu16, DGS2022}. 
	
	For comparison let us look at tent transformed Halton point set of size $N=2^n$ generated via binary digit reversals. While we see that the Halton points are generally worse then the Fibonacci lattice, the rates only seem to differ in the power of the logarithm. It could even be possible to get a closed form expression for the corresponding worst case error (similar to \cite[Theorem 8]{HKP2021}) where the dominating term is given by $n^2/N^2$.
	
	We also included some data on the \emph{digitally shifted} Halton sets (again see \cite{HKP2021}, the discussion around Theorem 9), which visibly improves the quality of the point set. We consider point sets via random digital shifts, where we calculated these values empirically, averaging the worst case error of $200$ randomly generated digitally shifted Halton point sets. Perhaps it is also possible to give a closed form expression for the average case error with dominating term $n/N^2$ similar to \cite{CDP06, JosefDick2005, HKP2021}.
	
	In \cite{HZ1969} (also see \cite{FP2011}) it was observed that a concrete digital shift, namely changing the parity of every second bit in the $y$-coordinate, improves the quality of the Halton point sets. The plot for these \emph{Zaremba point sets} indicates a similar improvement for our situation. We did not calculate the precise asymptotic behavior of these point sets, although it does not seem to improve too much on the random digital shifts. It would be interesting to investigate if one can give an explicit construction of a (order $2$) digital net achieving the optimal rate of \eqref{eq:GSY_optimal_order}.
	
	\begin{figure}[htb]
		\begin{tikzpicture}[baseline, scale=0.967]
			\begin{axis}[
				font=\footnotesize,
				enlarge x limits=true,
				enlarge y limits=true,
				height=0.5\textwidth,
				grid=major,
				width=\textwidth,
				xmode=log,
				ymode=log,
				xlabel={$N$},
				ylabel={QMC error},
				legend style={legend cell align=right, at={(0.97, 0.97)}},
				legend columns = 1,
				]
				\addplot [forget plot,black,domain=400:600000, samples=100, dotted,ultra thick]{3*ln(x)^5*x^(-2)};
				\addlegendimage{black,dotted,ultra thick}
				\addlegendentry{$\sim (\log N)^5/N^2$}
				
				\addplot[blue,mark=square,mark size=2.2pt,mark options={solid}] coordinates {
					(4, 0.05419342175969326)
					(8, 0.03450510395342172)
					(16, 0.009842047891461328)
					(32, 0.012437191659413364)
					(64, 0.0043206323927829195)
					(128, 0.006503642514446228)
					(256, 0.001361159190235231)
					(512, 0.0004982987213616745)
					(1024, 7.109405058150175e-05)
					(2048, 4.2109371703063985e-05)
					(4096, 0.00010180191760824226)
					(8192, 0.00010221017588662982)
					(16384, 3.450261582669255e-05)
					(32768, 1.619363383164572e-05)
					(65536, 6.538407201589883e-06)
					(131072, 4.103956829783826e-07)
					(262144, 6.294633606827811e-07)
					(524288, 5.811858692539296e-08)
				};
				
				\addplot[orange,mark=star,mark size=2.2pt,mark options={solid}] coordinates {
					(4, 0.10640468275012939)
					(8, 0.024159222328965096)
					(16, 0.015050444988515994)
					(32, 0.0009828742006763706)
					(64, 0.0014070748890046756)
					(128, 0.0014459247536425912)
					(256, 0.000973119840648763)
					(512, 1.741075543159949e-05)
					(1024, 6.754788048014342e-05)
					(2048, 0.00034606183534524685)
					(4096, 9.374563199466181e-05)
					(8192, 1.6559815602427973e-05)
					(16384, 4.955177150058794e-05)
					(32768, 2.882564575248376e-06)
					(65536, 7.0870991987356356e-06)
					(131072, 4.0981922521287e-06)
					(262144, 8.689343697163583e-07)
					(524288, 7.526238189627788e-08)
				};
				
				\addplot[blue,mark=square,mark size=2.2pt,mark options={solid}] coordinates {
					(4, 0.16318206360481044)
					(8, 0.1128855331650965)
					(16, 0.02284673298649497)
					(32, 0.018547248015082028)
					(64, 0.0031747107417253225)
					(128, 2.488098988505171e-05)
					(256, 0.0018960338311760005)
					(512, 0.0014245201090030567)
					(1024, 3.6065982446990244e-05)
					(2048, 0.0004757062256527166)
					(4096, 0.00010472550908747171)
					(8192, 0.00016751098269591894)
					(16384, 6.676335681944987e-05)
					(32768, 6.0572772831709824e-05)
					(65536, 4.3845541661945816e-06)
					(131072, 1.0644646459649031e-05)
					(262144, 1.2965745740614744e-06)
					(524288, 1.0037428637462133e-07)
				};
				
				\addplot[blue,mark=square,mark size=2.2pt,mark options={solid}] coordinates {
					(4, 0.011222783402922052)
					(8, 0.005517720316833033)
					(16, 0.04482425183589085)
					(32, 0.022147074268487943)
					(64, 0.012865730013275147)
					(128, 0.011292004575437808)
					(256, 0.0011505501557724975)
					(512, 0.0027875247068005157)
					(1024, 0.0008983089592524123)
					(2048, 0.00013405300889731428)
					(4096, 0.00036266487314089553)
					(8192, 3.4031084313530373e-05)
					(16384, 1.1466033667484196e-05)
					(32768, 4.11066664804379e-05)
					(65536, 5.502889889781902e-06)
					(131072, 1.9315868867309568e-06)
					(262144, 2.066881548743793e-06)
					(524288, 4.5697300688843115e-07)
				};
				\addlegendentry{Tent transformed}
				
				\addplot[orange,mark=star,mark size=2.2pt,mark options={solid}] coordinates {
					(4, 0.12569339315776623)
					(8, 0.03638567544716477)
					(16, 0.04028667286322672)
					(32, 0.0010394222214119616)
					(64, 0.020583291676830606)
					(128, 0.0065634812678797136)
					(256, 0.002432522712398226)
					(512, 0.0015689689501964357)
					(1024, 0.00022356894587753198)
					(2048, 0.000428741280248517)
					(4096, 0.00011806053991004341)
					(8192, 0.0001072445673887291)
					(16384, 5.288108894006047e-05)
					(32768, 1.7286359760107268e-07)
					(65536, 5.208990214218552e-06)
					(131072, 7.285134564282504e-06)
					(262144, 8.3944049596953e-07)
					(524288, 1.5287514990477996e-07)
				};
				\addplot[orange,mark=star,mark size=2.2pt,mark options={solid}] coordinates {
					(4, 0.06490476339828935)
					(8, 0.034331278795008933)
					(16, 0.03134083386977728)
					(32, 0.010031338970971592)
					(64, 0.01243840216386504)
					(128, 0.0020315671204195066)
					(256, 0.0008150908647325825)
					(512, 0.0006058097046230358)
					(1024, 0.0006721609344417667)
					(2048, 0.0002421484775807414)
					(4096, 3.436839882166441e-05)
					(8192, 7.340577854098275e-05)
					(16384, 4.6209682123280336e-05)
					(32768, 6.106999536483942e-06)
					(65536, 7.7506134924416e-06)
					(131072, 6.353462465725236e-08)
					(262144, 2.272935120406276e-07)
					(524288, 2.3474040832382515e-08)
				};
				\addlegendentry{Untransformed}
				
			\end{axis}
		\end{tikzpicture}
		\caption{Tensor product of piecewise linear functions, $d=3$ dimensions.}
		\label{fig:piecewise_linear}
	\end{figure}
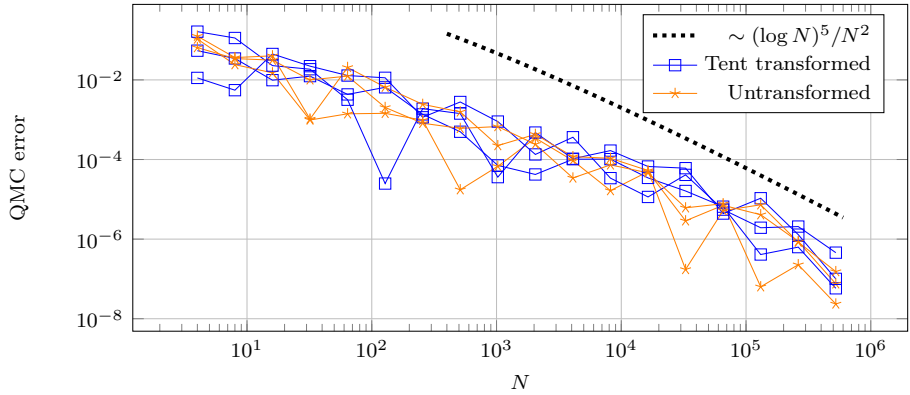 
	
	\begin{figure}[htb]
		\begin{tikzpicture}[baseline, scale=0.967]
			\begin{axis}[
				font=\footnotesize,
				enlarge x limits=true,
				enlarge y limits=true,
				height=0.45\textwidth,
				grid=major,
				width=0.5\textwidth,
				xmode=log,
				ymode=log,
				xlabel={$N$},
				ylabel={QMC error},
				legend style={legend cell align=right, at={(0.97, 1.6)}},
				legend columns = 1,
				]
				\addplot [forget plot,black,domain=400:600000, samples=100, dotted,ultra thick]{0.1*ln(x)^5*x^(-2)};
				\addlegendimage{black,dotted,ultra thick}
				\addlegendentry{$\sim (\log N)^5/N^{2}$}
				
				\addplot[blue,mark=square,mark size=2.2pt,mark options={solid}] coordinates {
					(4, 0.0646789692066334)
					(8, 0.041842929723551844)
					(16, 0.01792306359463101)
					(32, 0.0015582259648913833)
					(64, 0.0003084657005897599)
					(128, 4.7696489707941114e-05)
					(256, 5.4072722379661385e-05)
					(512, 0.00036794167672064105)
					(1024, 1.5189142625527136e-05)
					(2048, 9.829641946002508e-06)
					(4096, 1.04553327025387e-07)
					(8192, 6.827481149626506e-08)
					(16384, 6.94516555827521e-08)
					(32768, 6.759442717950895e-09)
					(65536, 2.7191318523838184e-10)
					(131072, 5.993734166340415e-11)
					(262144, 3.9486190817562514e-11)
					(524288, 3.237969944015853e-13)
				};
				\addlegendentry{Tent transformed}
				\addplot[orange,mark=star,mark size=2.2pt,mark options={solid}] coordinates {
					(4, 0.046049947264017885)
					(8, 0.008396823669281057)
					(16, 0.0014196259584939473)
					(32, 0.002450317420207609)
					(64, 0.0005400486868549593)
					(128, 0.00014343010117212718)
					(256, 2.082341494241382e-05)
					(512, 1.1049985694505109e-05)
					(1024, 4.961552397104355e-07)
					(2048, 1.1515336280328792e-07)
					(4096, 7.355991783618432e-08)
					(8192, 4.45095968638535e-09)
					(16384, 3.5177859193683774e-09)
					(32768, 4.3312355612568116e-10)
					(65536, 1.0609412806370372e-10)
					(131072, 4.3849256904886334e-11)
					(262144, 6.70949543220085e-12)
					(524288, 9.033130286013927e-13)
				};
				\addlegendentry{Untransformed}
				\addplot[green,mark=triangle,mark size=2.2pt,mark options={solid}] coordinates {
					(4, 0.03252776117847976)
					(8, 0.00843447643181045)
					(16, 0.00039175506201682384)
					(32, 0.00035332904418438634)
					(64, 0.0003287736615767048)
					(128, 7.68060416139876e-05)
					(256, 7.075462296545515e-05)
					(512, 1.3037814522303992e-05)
					(1024, 1.9635099808172463e-06)
					(2048, 1.4190323125904297e-07)
					(4096, 9.196670757452359e-08)
					(8192, 1.7261898606543538e-08)
					(16384, 1.254389732076e-10)
					(32768, 8.741951640402113e-10)
					(65536, 1.8025279618851886e-11)
					(131072, 1.0219906664093047e-11)
					(262144, 3.0057141337059104e-13)
					(524288, 4.413305461038136e-15)
				};
				\addlegendentry{Order $3$, untr.}
				\addplot[magenta,mark=o,mark size=2.2pt,mark options={solid}] coordinates {
					(4, 0.02634722657536191)
					(8, 0.03137980116509052)
					(16, 0.0146509596860152)
					(32, 0.016283672970795957)
					(64, 0.016988692918967176)
					(128, 0.01673026453123859)
					(256, 0.0004438733136120094)
					(512, 0.0004322568800541661)
					(1024, 2.794158911091714e-05)
					(2048, 3.215212192793672e-06)
					(4096, 1.1185816419510278e-05)
					(8192, 9.160324199873773e-06)
					(16384, 3.470938763934636e-08)
					(32768, 2.253397720881239e-08)
					(65536, 1.383805851233297e-08)
					(131072, 1.2003103184685255e-09)
					(262144, 3.477164501747486e-10)
					(524288, 1.3556408381026878e-10)
				};
				\addlegendentry{Order $3$, tent tr.}
			\end{axis}
		\end{tikzpicture}
		\begin{tikzpicture}[baseline, scale=0.967]
			\begin{axis}[
				font=\footnotesize,
				enlarge x limits=true,
				enlarge y limits=true,
				height=0.45\textwidth,
				grid=major,
				width=0.5\textwidth,
				xmode=log,
				ymode=log,
				xlabel={$N$},
				ylabel={},
				legend style={legend cell align=right, at={(0.97, 1.6)}},
				legend columns = 1,
				]
				\addplot [forget plot,black,domain=400:600000, samples=100, dotted,ultra thick]{0.0001*ln(x)^9*x^(-2)};
				\addlegendimage{black,dotted,ultra thick}
				\addlegendentry{$\sim (\log N)^9/N^{2}$}
				
				\addplot[blue,mark=square,mark size=2.2pt,mark options={solid}] coordinates {
					(4, 0.0010765495915119171)
					(8, 0.006662738143544055)
					(16, 0.004404665650709674)
					(32, 0.0018111620267278513)
					(64, 0.0015961123058837769)
					(128, 0.0016299702765190508)
					(256, 0.00012122721618308704)
					(512, 0.00017101693592562088)
					(1024, 8.239864079641633e-05)
					(2048, 8.80523630583016e-06)
					(4096, 3.7948293052914488e-06)
					(8192, 7.849908953995014e-06)
					(16384, 5.8502495317154046e-06)
					(32768, 2.5285182599987665e-06)
					(65536, 1.2355994964141889e-06)
					(131072, 2.876138785388449e-08)
					(262144, 1.9915519077546402e-08)
					(524288, 6.734377607823001e-09)
				};
				\addlegendentry{Tent transformed}
				\addplot[orange,mark=star,mark size=2.2pt,mark options={solid}] coordinates {
					(4, 0.05161857687516645)
					(8, 0.018913082322005736)
					(16, 0.010809645892221977)
					(32, 0.004844881273377357)
					(64, 0.00029687499814511667)
					(128, 0.00034916494327445985)
					(256, 0.00020547642211968492)
					(512, 2.3635688955174e-05)
					(1024, 7.284625350874383e-06)
					(2048, 2.8823417473573308e-05)
					(4096, 7.34653433448627e-07)
					(8192, 8.046903697346647e-07)
					(16384, 1.5394705175947557e-07)
					(32768, 1.0923617111391858e-08)
					(65536, 4.48898560427076e-08)
					(131072, 1.8841446072611452e-08)
					(262144, 9.766536533868546e-09)
					(524288, 1.885448358819942e-09)
				};
				\addlegendentry{Untransformed}
				\addplot[green,mark=triangle,mark size=2.2pt,mark options={solid}] coordinates {
					(4, 0.03868285986746312)
					(8, 0.018035385543686642)
					(16, 0.007991535464297596)
					(32, 0.006740167662145605)
					(64, 0.004037029262975665)
					(128, 3.459753171168436e-05)
					(256, 0.00019292817681292862)
					(512, 0.00032828309670747814)
					(1024, 8.6350104642839e-05)
					(2048, 3.9979518523888606e-05)
					(4096, 3.508860278843912e-05)
					(8192, 3.2721070324587625e-05)
					(16384, 2.8136749518508003e-06)
					(32768, 3.7626969852790776e-07)
					(65536, 5.006103180776115e-08)
					(131072, 2.050225206899542e-08)
					(262144, 1.874238734824551e-09)
					(524288, 3.908149440647843e-09)
				};
				\addlegendentry{Order $3$, untr.}
				\addplot[magenta,mark=o,mark size=2.2pt,mark options={solid}] coordinates {
					(4, 0.01816159950015829)
					(8, 0.019724896387236547)
					(16, 0.009438477075073493)
					(32, 0.008796986833159057)
					(64, 0.001485050848774675)
					(128, 0.0017086470709074862)
					(256, 0.0014740387468675138)
					(512, 4.7009350777642e-05)
					(1024, 1.4236550657202152e-05)
					(2048, 2.901180977208286e-06)
					(4096, 9.662775133255152e-06)
					(8192, 2.236728354619276e-06)
					(16384, 3.309358934265996e-06)
					(32768, 9.071258541291102e-06)
					(65536, 7.346354407605977e-06)
					(131072, 7.646514979693586e-07)
					(262144, 1.0935808529642631e-07)
					(524288, 2.064807476106493e-08)
				};
				\addlegendentry{Order $3$, tent tr.}
			\end{axis}
		\end{tikzpicture}
		\caption{Tensorized spline in $d=3$ (left) and $d=5$ (right) dimensions.}
		\label{fig:spline}
	\end{figure}
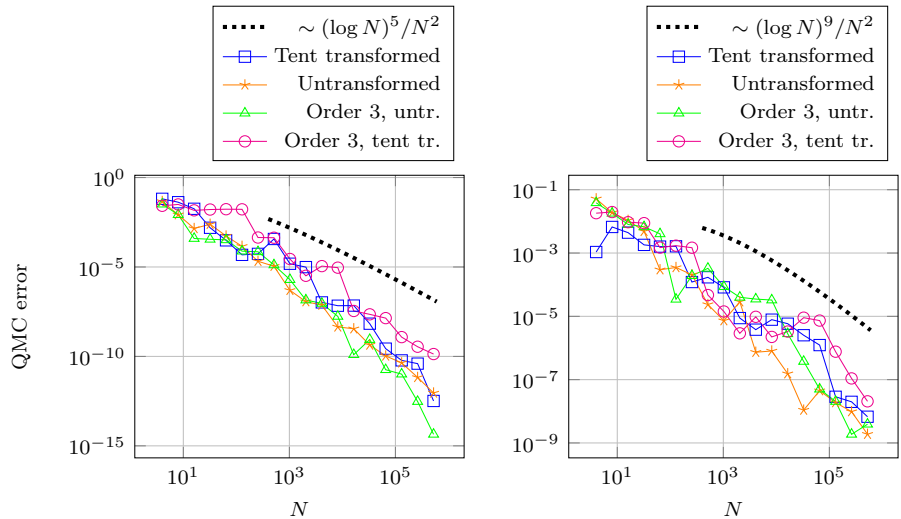 
	
	\subsection{Piecewise linear and smoother test functions}
	It is interesting to look at how tent transformed order $2$ nets perform for functions lying outside of the Sobolev space of dominating mixed smoothness $2$. For this we will take the tensor product of piecewise linear functions. We generate these functions by first choosing random nodes $0 = x_0 < x_1 < \dots < x_k = 1$ and corresponding values $y_i \in [-1, 1], i=0, 1, \dots, k$ uniformly at random. The univariate function $f$ is now given by the conditions $f(x_i) = y_i$ with linear interpolation between the nodes. We generate such a function for each coordinate and tensorize to obtain a $d$-variate function. Figure \ref{fig:piecewise_linear} shows the absolute error of the true integral and the quasi-Monte Carlo approximation using tent transformed order $2$ nets in $d=3$ dimensions (where we choose $3$, $4$ and $5$ intermediate nodes for each dimension respectively). The colored plots show $3$ random instances each for tent transformed and untransformed order $2$ nets. These tensorized, piecewise linear functions fulfill a similar condition to Theorem \ref{thm:FS_charact.}, namely
	\begin{equation}\label{eq:21}
		\|f\|_{\bB^{2,\text{dyad}}_{1,\infty}([0, 1]^d)}=\sup_{\bj \in \bbN_{-1}^d} 2^{|\bj|_1} \sum_{\bk \in \bbD_\bj} |d_{\bj, \bk}(f)| < \infty.
	\end{equation}
	With an analogous analysis as done in the proof of Theorem \ref{thm:upper_bound} (again using bounds from \cite{HMOU2016}) we obtain that for such functions the worst case error of the quasi-Monte Carlo rule for tent transformed order $2$ nets can be upper bounded by
	\begin{align*}
		\QMC(X, \bB^{2,\text{dyad}}_{1,\infty}([0, 1]^d)) \lesssim \frac{(\log N)^{2d-1}}{N^2}.
	\end{align*}
	
	We conclude with a look at functions with smoothness higher than $2$. We consider a cutout of a quadratic B-spline
	$$
	f(x) = \begin{cases}
		\frac34 - x^2 & , 0 \leq x \leq \frac12, \\
		\frac98 - \frac32 x + \frac12 x^2 & , \frac12 \leq x \leq 1,
	\end{cases}
	$$
	which was already used in \cite{Bar23, BLNU23} as a numerical example and has regularity $2.5 - \varepsilon$ on the scale of Sobolev-Hilbert spaces. We tensorize this function and look at the error of the quasi-Monte Carlo rule with tent transformed and untransformed order $2$ nets, see Figure \ref{fig:spline}. We also compare this to order $3$ nets (generated via digital interlacing with $3$ numbers). Unfortunately, from the plots it is still unclear if the added smoothness of these functions gets reflected in the error decay via tent transformed order $2$ nets.
	
	\paragraph{Acknowledgement.}
	
	BK and NN were supported by the ESF Plus Young Researchers Group ReSIDA-H2 (SAB, project number 100649391). TU acknowledges support by the German Research Foundation (DFG 403/4-1).
	
	\bibliographystyle{plain}

\end{document}